\documentclass[a4paper,12pt]{amsart}
\usepackage{vmargin}
\usepackage[pdftex]{graphicx}
\usepackage{amsfonts}
\usepackage{amssymb}
\usepackage{mathrsfs}
\usepackage{stmaryrd}
\usepackage{amsmath}
\usepackage{amsthm}
\usepackage{enumerate}
\usepackage{nomencl}
\usepackage[bookmarks=true]{hyperref}   
\usepackage{esint}
\usepackage{dsfont}
\usepackage{upgreek}
\usepackage{color}

\makeatletter
\renewcommand\section{\@startsection{section}{1}{\z@}%
                       {-3\p@ \@plus -4\p@ \@minus -4\p@}%
                       {3\p@ \@plus 4\p@ \@minus 4\p@}%
                      {\normalfont\normalsize\centering\scshape}}
\makeatother


\newcommand{\todo}[1]{}

\hypersetup{
    colorlinks,%
}


\author{Lashi Bandara}
\title{Density problems on vector bundles and manifolds}
\date{26 July, 2012}
\address{Lashi Bandara, Centre for Mathematics and its Applications, 
Australian National University, Canberra, ACT, 0200, Australia}
\urladdr{\href{http://maths.anu.edu.au/~bandara}{http://maths.anu.edu.au/~bandara}}
\email{\href{mailto:lashi.bandara@anu.edu.au}{lashi.bandara@anu.edu.au}}

\keywords{Density problems, 
first order operators on vector bundles,
Laplacian on vector bundles,
second order Sobolev spaces on manifolds}
\subjclass[2010]{46E35, 53C21, 58J60}



\def\colour{\colour}

\def\colour{\color}

\newtheorem{theorem}{Theorem}[section]
\newtheorem{corollary}[theorem]{Corollary}

\newtheorem{proposition}[theorem]{Proposition}

\newtheorem{remark}[theorem]{Remark}


%
%
%



\newcommand{\mdot}{\cdotp}

\newcommand{\bbrac}[1]{\left[#1\right]}
\newcommand{\dbrac}[1]{\left\{#1\right\}}
\newcommand{\modulus}[1]{\left|#1\right|}
\newcommand{\set}[1]{\dbrac{#1}}

\newcommand{\dom}{ {\mathcal{D}}}

\newcommand{\R}{\mathbb{R}}
\newcommand{\C}{\mathbb{C}}

\newcommand{\Na}{\ensuremath{\mathbb{N}}}

\newcommand{\script}[1]{\mathscr{#1}}



\newcommand{\intersect}{\cap}

\newcommand{\close}[1]{\overline{#1}}		



\renewcommand{\epsilon}{\varepsilon}

\renewcommand{\phi}{\varphi}

\newcommand{\graph}{\script{G}}		





\newcommand{\tensor}{\otimes}

\newcommand{\comm}[1]{\bbrac{#1}}		


\newcommand{\norm}[1]{\left\| #1 \right\|}			



\DeclareMathOperator{\tr}{tr}			


\DeclareMathOperator{\divv}{div}		


\newcommand{\cut}{\ \llcorner\ }			




\newcommand{\Ric}{{\rm Ric}}			

\DeclareMathOperator{\inj}{inj} 		

\newcommand{\Forms}[1][{}]{\mathbf{\Omega}^{#1}}		
\newcommand{\Tensors}[1][{}]{{\mathcal{T}}^{(#1)}}	
\newcommand{\Sect}{\mathbf{\Gamma}}		
	%
	%

\newcommand{\tanb}{{\rm T}}		
\newcommand{\cotanb}{{\rm T}^\ast}	


\DeclareFontFamily{OT1}{restrictfont}{}
\DeclareFontShape{OT1}{restrictfont}{m}{n}{<-> fmvr8x}{}



\newcommand{\adj}[1]{{#1}^\ast}			
\newcommand{\biadj}[1]{{#1}^{\ast\ast}}		
\newcommand{\extd}{{\rm d}}			
\newcommand{\intd}{{\updelta}}
\newcommand{\Dir}{{\rm D}}			


\newcommand{\inprod}[1]{\left\langle #1 \right\rangle}	
\newcommand{\grad}{\nabla}			
\newcommand{\conn}[1][{}]{{\grad_{{#1}}}}		



\newcommand{\conj}[1]{\overline{#1}}				

\newcommand{\Lp}[2][{}]{{\rm L}^{#2}_{\rm #1}}		
\newcommand{\Ck}[2][{}]{{\rm C}^{#2}_{\rm #1}}		
\newcommand{\Sob}[2][{}]{{\rm W}^{#2}_{\rm #1}}		





\newcommand{\Hil}{\script{H}}			
\newcommand{\Lap}{\Delta}			
\newcommand{\BLap}{\Lap_{\rm B}}
\newcommand{\ALap}{\Lap_{\rm A}}




\newcommand{\coLap}{\rotatebox[origin=c]{-90}{$\Lap$}}

\newcommand{\symb}{\upsigma}
\newcommand{\cp}{{\rm c}}

\newcommand{\cD}{\mathcal{D}}

\newcommand{\cV}{\mathcal{V}}
\newcommand{\cM}{\mathcal{M}}

\newcommand{\cW}{\mathcal{W}}

\newcommand{\mg}{\mathrm{g}}
\newcommand{\mh}{\mathrm{h}}


\newcommand{\RNum}[1]{\uppercase\expandafter{\romannumeral #1\relax}}


\setcounter{tocdepth}{1}

\begin{document}

\maketitle
\vspace*{-2em}
\begin{abstract}
We study some canonical 
differential operators
on vector bundles
over smooth, complete Riemannian manifolds.
Under very general assumptions, 
we show that smooth, compactly supported
sections are dense in the domains
of these operators.
Furthermore, we show that smooth, 
compactly supported functions
are dense in second order
Sobolev spaces on such manifolds
under the sole additional assumption
that the Ricci curvature is uniformly bounded
from below.
\end{abstract}
\vspace*{-0.5em}
\tableofcontents
\vspace*{-2em}

\parindent0cm
\setlength{\parskip}{\baselineskip}

\section{Introduction}

In the analysis of differential operators,
 it is often useful in calculations
to know that smooth, compactly supported functions
are dense in the domain of the operator in question.
We call this the \emph{density problem}.
In this paper, we show that
the density problem can be solved in the positive
for some canonical differential operators
over a wide class of vector bundles.

More precisely, let $\cV$
be a smooth vector bundle over
a smooth, complete Riemannian manifold $\cM$.
Suppose that $\cV$ is equipped with
a metric $\mh$ and connection
$\conn$ that are compatible.
Defining $\divv = -\adj{\conn}$
in the $\Lp{2}$ theory, 
we show that $\Ck[c]{\infty}(\cotanb \cM \tensor \cV)$
is dense in $\dom(\divv)$.
Furthermore, letting $\BLap = -\divv\close{\conn}$,
the \emph{Bochner Laplacian} on $\cV$,
we show
that $\Ck[c]{\infty}(\cV)$
is dense in $\dom(\BLap)$.
In the case that $\cV = \Forms(\cM)$, 
the exterior algebra over $\cM$, 
we consider the operator $\extd$, 
the exterior derivative,
and its adjoint $\intd = \adj{\extd}$.
In this situation, we show that $\Ck[c]{\infty}(\Forms(\cM))$
is dense in $\dom(\intd)$.

While some of the results we present
in this paper are known and 
can be accessed via alternative methods,
our result on the density problem
for second order Sobolev spaces
on manifolds is new. The Sobolev space 
$\Sob{2,2}(\cM)$ is 
defined as the closure of 
functions $u \in \Ck{\infty} \intersect \Lp{2}(\cM)$
satisfying $\modulus{\conn u}, \modulus{ \conn^2 u} \in \Lp{2}(\cM)$
with respect to the norm 
$\norm{u}_{\Sob{2,2}} = \norm{u} + \norm{\conn u} + \|\conn^2 u\|,$
and 
$\Sob[0]{2,2}(\cM)$ 
as the closure of 
$\Ck[c]{\infty}(\cM)$
under the same norm.
According to Hebey in \cite{Hebey},
the best known
conditions
yielding $\Sob[0]{2,2}(\cM) = \Sob{2,2}(\cM)$
is to require both $\Ric \geq \eta \mg$ and
$\inj(\cM) \geq \kappa$,
for some $\eta \in \R$ and $\kappa > 0$.
We dispense
the latter bound, 
yielding the following highlight
theorem of this paper.

\begin{theorem}
\label{Thm:High} 
Let $\cM$ be a smooth, complete Riemannian manifold with
metric $\mg$ and Levi-Cevita connection $\conn$.
If there exists $\eta \in \R$ such that $\Ric \geq \eta \mg$, then 
$\Sob[0]{2,2}(\cM) = \Sob{2,2}(\cM)$. 
\end{theorem}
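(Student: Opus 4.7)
The plan is to reduce the density problem for $\Sob{2,2}(\cM)$ to the density problem for $\dom(\BLap)$, already solved earlier in the paper, via the Bochner--Weitzenb\"ock identity; the Ricci lower bound serves as the quantitative ingredient that upgrades $\Lp{2}$ graph-norm convergence of $\BLap$ to $\Sob{2,2}$ norm convergence, uniformly on $\Ck[c]{\infty}(\cM)$. Given $u \in \Sob{2,2}(\cM)$, I first pick $v_j \in \Ck{\infty}(\cM)$ with $v_j, \modulus{\conn v_j}, \modulus{\conn^2 v_j} \in \Lp{2}(\cM)$ and $v_j \to u$ in $\Sob{2,2}$. Each $v_j$ lies in $\dom(\BLap)$ with $\BLap v_j = -\tr \conn^2 v_j \in \Lp{2}(\cM)$: indeed $v_j \in \dom(\close{\conn})$ by a first-order Lipschitz cutoff argument, available on any complete manifold, and $\conn v_j \in \dom(\divv)$ by integration by parts against $\Ck[c]{\infty}$ test sections. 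Invoking the density of $\Ck[c]{\infty}(\cM)$ in $\dom(\BLap)$ for each fixed $j$, I obtain $u_{j,k} \in \Ck[c]{\infty}(\cM)$ with $u_{j,k} \to v_j$ and $\BLap u_{j,k} \to \BLap v_j$ in $\Lp{2}$.

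The crux is to promote this to $\Sob{2,2}$ convergence. For $w \in \Ck[c]{\infty}(\cM)$, the Bochner--Weitzenb\"ock identity reads
\[
\int_{\cM} \modulus{\conn^2 w}^2 \, d\mg \;=\; \int_{\cM} \modulus{\BLap w}^2 \, d\mg \;-\; \int_{\cM} \Ric(\conn w, \conn w) \, d\mg,
\]
and combined with $\Ric \geq \eta \mg$ and the elementary identity $\norm{\conn w}^2 = \inprod{w, \BLap w} \leq \norm{w}\norm{\BLap w}$, produces a constant $C = C(\eta)$ with
\[
\norm{w}_{\Sob{2,2}}^2 \;\leq\; C\bigl(\norm{w}^2 + \norm{\BLap w}^2\bigr)
\]
for every $w \in \Ck[c]{\infty}(\cM)$. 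Applied to the differences $u_{j,k} - u_{j,l}$, this shows that $(u_{j,k})_k$ is Cauchy in $\Sob{2,2}$, so it converges to $v_j$ in $\Sob{2,2}$; a diagonal extraction then yields a sequence in $\Ck[c]{\infty}(\cM)$ converging to $u$ in $\Sob{2,2}$.

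The main obstacle, and the precise reason Hebey's approach required an injectivity radius bound, is that under a Ricci lower bound alone there is no smooth cutoff on $\cM$ with uniformly bounded Hessian, so a direct cut-off-and-mollify scheme on $\cM$ fails at second order. This plan circumvents the difficulty by housing all second-order control inside the algebraic Bochner identity, where only the Ricci lower bound is needed, and by performing cutoffs solely at the first-order level, where completeness alone suffices.
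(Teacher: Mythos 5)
Your proposal is correct and follows essentially the same route as the paper: both arguments rest on the embedding $\Sob{2,2}(\cM) \subset \dom(\BLap)$, the density of $\Ck[c]{\infty}(\cM)$ in $\dom(\BLap)$ coming from the essential self-adjointness of $\Lap_c$, and the integrated Bochner--Lichnerowicz--Weitzenb\"ock identity with $\Ric \geq \eta\mg$ to dominate $\norm{\conn^2 w}$ by the graph norm of $\BLap$ on $\Ck[c]{\infty}(\cM)$. The only difference is presentational: the paper packages the last step as the set equality $\dom(\BLap) = \Sob[0]{2,2}(\cM)$ and chains inclusions, whereas you run an explicit double-sequence Cauchy and diagonal argument.
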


Our motivation to study density problems
emerges from the study of
Kato square root type problems
in the presence of geometry.
The classical version of this
problem on $\R^n$
was solved by Auscher, Hofmann, Lacy, 
McIntosh and Tchamitchian in \cite{AHLMcT}
and was rephrased in a first order
point of view in  \cite{AKMc}
by Axelsson (Ros\'en),
Keith and McIntosh.
The proofs in the latter paper particularly
exploit the density of compactly supported smooth
functions and vector fields
in the domains
of the gradient and divergence
operators respectively.
Morris in \cite{Morris3}
combines ideas from \cite{AKMc} and \cite{AKM2},
rephrases and solves similar questions
in the context of submanifolds in $\R^n$.
There, density facts were needed
but these were handled by different techniques.
In formulating and solving a 
Kato square root problem on vector bundles
by the author and McIntosh in \cite{BMc}, 
density problems became 
of central importance. While some of these issues
were circumvented by alternative means,
the desire to address density concerns 
persisted.  

The main theme and philosophy in this paper
is the following. Given a first 
order differential operator $D: \Ck{\infty}(\cV) \to
\Ck{\infty}(\cW)$ (where $\cW$ is another vector bundle),  
we construct an operator $\Pi$ 
that is symmetric on $\Ck[c]{\infty}(\cV) \oplus \Ck[c]{\infty}(\cW)$
such that it encodes $D$ in a natural way.
We then show that the density 
results follow from, or are sometimes equivalent to,
showing that $\Pi$ is essentially self-adjoint.
The density problem for second order Sobolev spaces
on manifolds follows from
the essential self-adjointness of a particular $\Pi^2$
coupled with the lower bound on Ricci curvature. 
The paper \cite{Ch} by Chernoff, 
introduced to the author by Baskin, 
justifies this reduction of density problems
to essential self-adjointness,
as it provides a set of very general conditions under which
symmetric operators and their powers
are essentially self-adjoint.
\section*{Acknowledgements}

Work on this paper commenced whilst
the author was visiting Steve Hofmann
at the University 
of Missouri, Columbia, Missouri
supported by an Australian-American 
Fulbright Scholarship
and completed while visiting 
Northwestern University, Evanston, Illinois,
supported by this institution. 
The author wishes to acknowledge
these institutions as well as
his home institution, the Australian National University. 

The author would like to acknowledge
and emphasise 
the invaluable contribution made by Dean Baskin who introduced
the author to work of Paul Chernoff
that made this paper possible.
Furthermore, the author would like
to thank Andrew Morris and Michael Munn
for the many fruitful conversations
regarding density problems.
The author would also like to acknowledge
his PhD supervisor 
Alan McIntosh
for his helpful comments, 
providing corrections,
and suggesting improvements to
some of the results of this paper.
\section{Preliminaries}

\subsection{Notation}
Throughout this paper, we assume
Einstein summation notation. Explicitly,
whenever there is a raised
and lowered index appearing multiplicatively,
we assume summation over that index.
For two quantities $a, b \geq 0$,
we express inequalities up to a
constant by writing $a \lesssim b$. By this
we mean that there exists a $C > 0$ such that  
$a \leq C b$. The constant $C$ will 
be independent of the quantities $a$ and $b$, 
and the dependence will be clear from context
or preceding hypotheses. By writing $a \simeq b$, 
we mean that $a \lesssim b$ and $b \lesssim a$.

\subsection{Operator theory}

In this section, we provide
an exposition of ideas
from operator theory that 
we use in this paper.
While some of the ideas here are
valid for operators on general 
Banach spaces, we  
restrict ourselves to 
the theory in Hilbert spaces.
We refer the reader to the
excellent books \cite{Kato}
by Kato and \cite{Yosida} by Yosida
which provide a more complete
description of operator theory.
 
Let $\Hil_1$ and $\Hil_2$ be
a Hilbert spaces with inner products
$\inprod{\mdot,\mdot}_j: \Hil_j \times \Hil_j \to \C$.
We say that a linear 
map $T: \dom(T) \subset \Hil_1 \to \Hil_2$
is an \emph{operator} with \emph{domain} $\dom(T)$.
If $S$ is an operator such that $\dom(S) \subset \dom(T)$
and $Tu = Su$ for $u \in \dom(S)$, then we 
write $S \subset T$
and say that $T$ \emph{extends} $S$.
We emphasise that an operator is characterised
by both the map and the domain.

An operator $T$ is said to be
\emph{densely-defined}
if $\close{\dom(T)} = \Hil_1$ and it is said
to be \emph{closed} if its graph,
$\graph(T) = \set{(u, Tu): u \in \dom(T)}$,
is a closed subset of $\Hil_1 \times \Hil_2$.
The latter notion is equivalent
to requiring that whenever $u_n \in \dom(T)$
such that $u_n \to u \in \Hil_1$ and 
$Tu_n \to v \in \Hil_2$, then
$u \in \dom(T)$ and $Tu = v$.
Define the \emph{operator norm}
of $T$ by  $\norm{u}_T = \norm{u}_{\Hil_1} + \norm{Tu}_{\Hil_2}$
whenever $u \in\dom(T)$.
Then, an operator $T$ is closed
if and only if $(\dom(T), \norm{\mdot}_T)$
is a Banach space.
When we say ``$X$ is dense in $\dom(T)$,''
we mean that $X \subset \dom(T)$
is dense in the operator norm of $T$.
We make a motivational remark that
the notions closed and densely-defined
are particularly useful when studying differential 
operators.

By the closed graph theorem (see Theorem 5.20 in \cite{Kato}),
a closed operator 
$T$ with $\dom(T) = \Hil$ 
is \emph{bounded}, by which we mean
there exists $C > 0$ such that $\norm{Tu} \leq C \norm{u}$
for $u \in \Hil$. Boundedness of an operator 
is equivalent to saying that it is continuous.

A notion that will be very important
in later parts is
that an operator $T$ be \emph{closable}.
By this we mean that 
$\close{\graph(T)}$
is equal to the graph of another operator $\close{T}$ called the
 \emph{closure} of
$T$. 
An operator $T$ is closable
if and only if $u_n \in \dom(T)$ with
$u_n \to 0$ and $Tu_n \to v$ implies $v = 0$. 
It is immediate that 
$T \subset \close{T}$.

We say that an operator $S:\dom(S) \subset \Hil_2 \to \Hil_1$
\emph{is adjoint} to $T$ 
if 
$
\inprod{Tu,v}_{\Hil_2} = \inprod{u,Sv}_{\Hil_1}$
for all $u \in \dom(T)$ and $v \in \dom(S)$.
For such operators, if one is
densely-defined, then the other is closable
(see Theorem 5.28 in \cite{Kato}).
This is an important
fact that we often use.
For a given $T$, there
are many operators $S$ that
are adjoint to $T$. However,
if $T$ is densely-defined, 
then there exists a unique
maximal operator $\adj{T}$
adjoint to $T$ called \emph{the adjoint} 
of $T$. By maximal, 
we mean that if $S$ is any operator
adjoint to $T$, then
$S \subset \adj{T}$.
We construct $\adj{T}$
in the following way.
Let $v \in \dom(\adj{T})$ 
if there exists $f \in \Hil_1$ 
such that 
$\inprod{Tu,v}_{\Hil_2} = \inprod{u, f}_{\Hil_1}$
for all $u \in \dom(T)$
and define $\adj{T}v = f$.
The uniqueness of $\adj{T}$
follows since $\dom(T)$ is dense 
in $\Hil$.
If further $T$ is closable,
then $\adj{T}$ is densely-defined
and closed, and furthermore,
$\biadj{T} = \close{T}$
(see Theorem 5.29 in \cite{Kato}).

An important situation arises
when $\Hil = \Hil_1 = \Hil_2$.
There, we say an operator $T$ is \emph{hermitian symmetric}
(or simply symmetric) to mean $\inprod{Tu,v} = \inprod{u,Tv}$,
for all $u, v \in \dom(T)$.
The operator $T$ is said to be 
\emph{skew-symmetric} if 
$\inprod{Tu,v} = \inprod{u, -Tv}.$
A symmetric operator
is said to be self-adjoint if 
$\adj{T} = T$.
Explicitly, this means 
$Tu = \adj{T}u$
for all $u \in \dom(T) = \dom(\adj{T})$.
Note that a self-adjoint operator
is necessarily densely-defined and closed.
A densely-defined, closable
operator $T$ is said to be \emph{essentially
self-adjoint} if $\close{T}$
is self-adjoint.
Self-adjointness will be one of the primary 
tools that we use in this paper.
Although we run the risk of labouring the point, 
we present the following description of
the maximal nature of self-adjointness.

\begin{proposition}
\label{Prop:Sym}
Let $T$ be self-adjoint, and $S$ 
a symmetric extension of $T$. Then, 
$S = T$.
\end{proposition}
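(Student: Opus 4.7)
The plan is to use the order-reversing property of the adjoint operation together with the symmetry of $S$ to sandwich $S$ between $T$ and itself.

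First, I would observe that since $S$ is a symmetric extension of $T$, it is in particular densely-defined (because $T$ is, being self-adjoint), and therefore its adjoint $\adj{S}$ exists as discussed in the preceding exposition. Symmetry of $S$ means that $S$ itself is adjoint to $S$ in the sense defined above, and by the maximality property of the adjoint this gives $S \subset \adj{S}$.

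Next, I would apply the order-reversing property of the adjoint to the inclusion $T \subset S$: if any operator is adjoint to $S$, it is automatically adjoint to $T$ (by restriction of the domain), so by maximality of $\adj{T}$ we get $\adj{S} \subset \adj{T}$. Combining this with $S \subset \adj{S}$ and the hypothesis $\adj{T} = T$, we obtain the chain
\[
T \subset S \subset \adj{S} \subset \adj{T} = T.
\]
Every inclusion in this chain must therefore be an equality, which in particular forces $S = T$. The only nontrivial ingredient is the order-reversing property of the adjoint, and this is immediate from the construction of $\adj{T}$ recalled in the preceding subsection. There is no real obstacle here; the proof is essentially a two-line diagram chase, and the main point is simply to set it up carefully given the precise definitions of extension and adjoint employed in the paper.
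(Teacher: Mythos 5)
Your proof is correct and rests on the same essential mechanism as the paper's: the symmetry of $S$ together with $T\subset S$ forces $S\subset \adj{T}=T$ via the maximality of the adjoint. The paper simply verifies $\dom(S)\subset\dom(\adj{T})$ directly in one step, whereas you factor the same computation through the chain $S\subset\adj{S}\subset\adj{T}$; the difference is purely organizational.
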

\begin{proof}
First, since $T = S$ 
on $\dom(T)$, it suffices to prove that $\dom(S) \subset \dom(T)$.
By the fact that $S$ is symmetric, we have that
for all $u,v \in \dom(S)$, 
$\inprod{Su,v} = \inprod{u, Sv}$.
In particular, 
this reads
$\inprod{Su,v} = \inprod{u, Tv}$
whenever $v \in \dom(T)$.
By the construction 
of the adjoint operator, we conclude
that $u \in \dom(\adj{T})$
and that $\adj{T}u = Su$.
But by the self-adjointness of $T$,
$\dom(\adj{T}) = \dom(T)$ and so we have
that $\dom(S) \subset \dom(T)$.
\end{proof}

The following proposition 
yields the same conclusion as above,
but it is more useful since 
we only need to verify that the operators
are equal on a dense subset.

\begin{proposition}
\label{Prop:Fun}
Let $T$ and $S$ 
be self-adjoint. 
If $\cD \subset \dom(T)\cup \dom(S)$, $\cD$ is dense
 in $\dom(T)$,
and $Tu = Su$ for all $u \in \cD$, then $T = S$. 
\end{proposition}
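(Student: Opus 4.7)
The plan is to reduce this to \thmref{Prop:Sym} (\textbf{reading the hypothesis as $\cD \subset \dom(T) \cap \dom(S)$, since $Tu = Su$ only makes sense when $u$ lies in both domains}). Since $S$ is self-adjoint it is in particular symmetric, so once we establish that $T \subset S$, Proposition \ref{Prop:Sym} immediately gives $T = S$.

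To show $T \subset S$, I would fix an arbitrary $u \in \dom(T)$ and use density of $\cD$ in $\dom(T)$ with respect to the graph norm $\norm{\cdot}_T$ to produce a sequence $u_n \in \cD$ with $u_n \to u$ in $\Hil$ and $Tu_n \to Tu$ in $\Hil$. Because each $u_n \in \cD \subset \dom(S)$ and $Su_n = Tu_n$, the image sequence $Su_n$ also converges to $Tu$. Now $S$ is self-adjoint, hence closed, so the convergences $u_n \to u$ and $Su_n \to Tu$ force $u \in \dom(S)$ with $Su = Tu$. This gives $\dom(T) \subset \dom(S)$ and agreement of the two operators on $\dom(T)$, i.e.\ $T \subset S$.

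With $T \subset S$ established and $S$ symmetric (being self-adjoint), the hypotheses of \thmref{Prop:Sym} are met, and we conclude $S = T$.

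The only genuine obstacle is verifying that the notion of density of $\cD$ in $\dom(T)$ really refers to the graph norm $\norm{\cdot}_T$ (as stipulated in the preliminaries of \S2.2) rather than the $\Hil$-norm; this is essential, because $\Hil$-density of $\cD$ alone would not propagate $Tu_n \to Tu$, and without that we cannot invoke closedness of $S$. Once this is in hand, the rest is a short closed-graph argument followed by an appeal to \thmref{Prop:Sym}.
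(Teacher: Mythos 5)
Your proof is correct and follows essentially the same route as the paper: reduce to Proposition \ref{Prop:Sym} by showing $T \subset S$, using graph-norm density of $\cD$ in $\dom(T)$ together with the closedness of $S$ (which follows from its self-adjointness). Your parenthetical reading of the hypothesis as $\cD \subset \dom(T) \cap \dom(S)$ is the right one, and your remark that density must be taken in the norm $\norm{\mdot}_T$ matches the convention fixed in the paper's preliminaries.
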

\begin{proof}
Since $S$ is self-adjoint,
in particular it is symmetric,
and thus, 
by Proposition \ref{Prop:Sym},
it suffices to show that $T \subset S$.

Let $u \in \dom(T)$. By the hypotheses, there exists a sequence
$u_j \in \cD$ such that $u_j \to u$
and $T u_j \to T u$, and further $T u_j = S u_j$. 
Thus, we have that $u_j \to u$ and $S u_j \to Tu$.
But the self-adjointness of $S$ 
in particular means that $S$ is closed
and hence $u \in \dom(S)$ and
$Su = Tu$. This shows that $T \subset S$ as required. 
\end{proof}

\subsection{Hyperbolic equations and essential self-adjointness}

The work of Chernoff in \cite{Ch}
is central to proving the results
we present in this paper.
Thus, for the convenience of the reader,
we recall some notions from this paper.
 
Our setting is the following. 
Let $\cM$ be a smooth, complete Riemannian manifold
with smooth metric $\mg$ and volume measure $d\mu$. 
Further, let $\cV$ denote a smooth, complex vector bundle
of finite rank 
over $\cM$ with 
smooth metric $\mh$. Since $\cV$ is complex,
we assume that $\mh$ is hermitian.
We denote the fibre of $\cV$ over 
$x \in \cM$ by $\cV_x$.
When we consider real bundles,
we always implicitly identify 
them under their complexification.

By $\Sect(\cV)$ denote the
$d\mu$-measurable \emph{sections}
of $\cV$. That is,
$\xi \in \Sect(\cV)$ if $\xi:\cM \to \cV$, 
$\xi(x) \in \cV_x$
and $d\mu$-measurable in $x$. 
Then, we define $\Lp{2}(\cV)$
as the space of $\xi \in \Sect(\cV)$
such that 
$$\int_{\cM} \modulus{\xi(x)}_x^2\ d\mu(x) < \infty.$$
We note that $\Lp{2}(\cV)$ is a Hilbert
space equipped with the inner product
$$\inprod{\xi,\eta} = \int_{\cM} \mh(\xi(x),\eta(x))_x\ d\mu(x).$$ 

Next, let $L$ be a first order
differential operator on $\Ck{\infty}(\cV)$,
and let $L_c = L$ on $\Ck[c]{\infty}(\cV)$.
We recall the \emph{symbol} of 
$L$ from \cite{Ch}.
Let $x \in \cM$, $v \in \cotanb_x \cM$, 
$e \in \cV_x$ and fix
$g \in \Ck{\infty}(\cM)$ and $f \in \Ck{\infty}(\cV)$
such that $\extd g(x) = v$ and 
$f(x) = e$. Then,
the symbol of $L$ at $x$ 
in the direction $v$ acting on $e$ is
defined by 
$$
\symb(x,v)e = \symb_L(x,v)e = \comm{L,gI}f(x).$$
We note that this is really the
\emph{principal} symbol of the operator
$L$ and emphasise as in \cite{Ch} that
this definition only applies
to first order operators. 

The \emph{speed
of propagation} at $x$ 
is given by
$$ \cp(x) = \cp_L(x) = \sup_{\modulus{v} = 1} \modulus{\symb(x,v)}$$
where $\modulus{\symb(x,v)} = \sup_{\modulus{e}= 1}\modulus{\symb(x,v)e}$.
Fix $x_0 \in \cM$ and $r > 0$. Then, 
the speed of propagation of $L$ inside the ball
$B(x_0,r)$ by
$$\cp(r) = \cp_L(r) = \sup_{x \in B(x_0,r)} \cp(x).$$
With this notation at hand, we
present the following
theorem which is an immediate 
consequence of 
Theorem 2.2 in \cite{Ch}.
This result is the central 
tool that we use in this paper.

\begin{theorem}
\label{Thm:Main}
Let $\cV$ be a smooth
vector bundle with smooth metric $\mh$
over a smooth, complete Riemannian manifold $\cM$.
Let $\Pi$ be a first order differential operator
on $\Ck{\infty}(\cV)$
such that $\Pi_c = \Pi$ 
with domain $\Ck[c]{\infty}(\cV)$
is symmetric. 
Furthermore, suppose that
there exists $C > 0$ such that
$\cp_{\Pi}(x) \leq C$ for each $x \in \cM$.
Then, every power of $\Pi_c$
is essentially self-adjoint.
\end{theorem}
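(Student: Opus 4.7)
The plan is to apply Theorem~2.2 of \cite{Ch} directly; all three of its hypotheses are supplied by our setting. That theorem gives essential self-adjointness of every power of a symmetric first-order differential operator on a complete Riemannian manifold whose propagation speed is globally bounded. Completeness of $\cM$ and symmetry of $\Pi_c$ are assumed outright, and the bound $\cp_\Pi(x) \leq C$ is precisely the global propagation speed bound in Chernoff's language, so the statement is a formal translation. For context, I sketch the strategy underlying \cite{Ch}, which proceeds by constructing a unitary group generated by (a suitable closure of) $i\Pi_c$ and then leveraging its invariance on $\Ck[c]{\infty}(\cV)$.

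The engine is the hyperbolic Cauchy problem $\partial_t u(t) = i\Pi u(t)$ with $u(0) = f \in \Ck[c]{\infty}(\cV)$. Standard symmetric hyperbolic theory produces a smooth local solution, and the symmetry of $\Pi_c$ gives the conservation law $\partial_t \norm{u(t)}^2 = 0$. The central estimate is finite propagation speed: if $\spt{f} \subset \close{B(x_0,r)}$, then $\spt{u(t)} \subset \close{B(x_0, r + C\modulus{t})}$ on the interval of existence. One proves this by pairing the equation against a cutoff of the form $\chi(\dist(x_0,\cdot) - r - C\modulus{t})\, u(t)$ with $\chi$ smooth and decreasing; the symbol bound $\modulus{\symb_\Pi(x,v)} \leq C\modulus{v}$ together with the Lipschitz property of $\dist(x_0,\cdot)$ exactly match the rate $C$ chosen in the cutoff, making the outgoing flux vanish. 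Completeness then ensures that $\close{B(x_0,r+CT)}$ is compact for every $T$, so local solutions concatenate into a global smooth, compactly supported solution for all $t \in \R$.

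Setting $U(t)f := u(t)$ defines an $\Lp{2}$-isometry on $\Ck[c]{\infty}(\cV)$ that extends uniquely to a strongly continuous unitary group on $\Lp{2}(\cV)$. By Stone's theorem its generator is $iA$ with $A$ self-adjoint and agreeing with $\Pi_c$ on $\Ck[c]{\infty}(\cV)$; since $U(t)$ preserves the dense subspace $\Ck[c]{\infty}(\cV)$, that subspace is a core for $A$, so $\close{\Pi_c} = A$ is self-adjoint. To upgrade this to essential self-adjointness of each power $\Pi_c^k$, a standard argument combining smoothing of orbits $U(\cdot)u$ by convolution in $t$ with a spatial exhaustion of $\cM$ by compacts (which exists by completeness) identifies $\Ck[c]{\infty}(\cV)$ as a core for each $A^k$. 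The principal difficulty in this scheme, were it carried out from scratch rather than quoted from \cite{Ch}, is the finite propagation speed estimate at the generality of an arbitrary symmetric first-order operator on a curved bundle: the cutoff computation must be coordinate-free, the distance function is only Lipschitz in general so needs mollification, and the passage from local to global solutions relies on completeness precisely through compactness of closed metric balls. Once the group $U(t)$ is in hand, the remaining operator-theoretic steps are standard.
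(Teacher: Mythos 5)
Your proposal is correct and follows essentially the same route as the paper: both simply reduce the statement to Theorem~2.2 of \cite{Ch}. The paper's only additional bookkeeping is to pass from the symmetric $\Pi_c$ to the skew-symmetric $L = \imath\Pi_c$ (the form in which Chernoff states his hypothesis) and to observe that the uniform bound $\cp_{\Pi}(x) \leq C$ forces $\int_0^\infty \cp_L(r)^{-1}\, dr = +\infty$; your sketch of Chernoff's internal argument is accurate but not needed for the deduction.
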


\begin{proof}
We apply Theorem 2.2 in \cite{Ch}
which states that if $L$
is a skew-symmetric operator on $\Ck[c]{\infty}(\cV)$
and $\int_{0}^\infty \cp_L(r)^{-1} \ dr = +\infty$, 
then every power of $-\imath L$ is essentially
self-adjoint. 

First, let $L = \imath \Pi$.
Then, it is easy to see that $L_c = L$
with domain $\dom(L_c) = \Ck[c]{\infty}(\cV)$
is skew-symmetric. 
Furthermore, an easy calculation yields
that $\symb_{L}(x,v)e = \imath\symb_{\Pi}(x,v)e$.
Therefore, 
$\modulus{\symb_{L}(x,v)e} = \modulus{\symb_{\Pi}(x,v)e}$ 
which implies that 
$\cp_{L}(r) = \cp_{\Pi}(r) \leq C$
after fixing a base point $x_0 \in \cM$.
Thus,
$$ \int_0^\infty \frac{dr}{\cp_L(r)} 
= \int_0^\infty \frac{dr}{\cp_{\Pi}(r)}
\geq \int_0^\infty \frac{dr}{C} = +\infty.$$

Putting these facts together 
demonstrates that $L$ satisfies
the hypotheses of Theorem 2.2 in \cite{Ch}
and therefore,
we conclude that every
power of the operator $\Pi_c$
is essentially self-adjoint.
\end{proof}
\section{Connections, divergence and Laplacians on vector bundles}
\label{Sect:VB}

In this section, we further assume
that $\cV$ is equipped with a \emph{connection}
$\conn$. Recall that this is a map
$\conn: \Ck{\infty}(\cV) \to \Ck{\infty}(\cotanb\cM \tensor \cV)$
satisfying the \emph{Leibniz rule} 
$$\conn(fX) = \conn f \tensor X + f\conn X$$
for all $f \in \Ck{\infty}(\cM)$, 
$X \in \Ck{\infty}(\cV)$
and where $\conn f = \extd f$ is the exterior
derivative on $\cM$.
Furthermore, let us assume that $\conn$
and $\mh$ are \emph{compatible},
by which we mean 
the following \emph{product rule} 
$$X\mh(Y,Z) = \mh(\conn[X]Y, Z) + \mh(Y,\conn[X]Z)$$
holds for all $X \in \Ck{\infty}(\tanb\cM)$,
and $Y, Z \in \Ck{\infty}(\cV)$.
By letting $\conn[c] $ be the restriction of $\conn$ to $\Ck[c]{\infty}(\cV)$,
we find that 
\begin{equation*} 
\inprod{\conn[c] u, v} = \inprod{u, -\tr \conn[c] v},
\tag{$\star$}
\label{Eqn:Adj}
\end{equation*}
for all $u \in \Ck[c]{\infty}(\cV)$ 
and $v \in \Ck[c]{\infty}(\cotanb\cM \tensor \cV)$
(see Proposition 6.1
in \cite{BMc} for a proof).
This means the operators $\conn[c]$ and $-\tr \conn[c]$
are adjoint to each other
and 
by the density of $\Ck[c]{\infty}$
in $\Lp{2}$, both these
operators are densely-defined and closable.
 
Define $-\divv = \adj{\conn[c]}$. Clearly,
$\tr \conn[c] \subset \divv$, and moreover, 
$\close{\tr\conn[c]} \subset \divv$.
Furthermore,
let $\tilde{\conn} = \adj{(-\tr\conn[c])}$.
It is clear that $\close{\conn[c]} \subset \tilde{\conn}$. 
In this section we show that
under appropriate conditions, $\close{\conn[c]} = \tilde{\conn}$.
The following proposition 
illustrates the connection of this
statement to density problems.
 
\begin{proposition}
\label{Prop:VB:Eq}
Let $\cM$ be a smooth Riemannian manifold and
$\cV$ a vector bundle 
with metric $\mh$ and connection $\conn$ 
that are compatible. Then, 
the following are equivalent:
\begin{enumerate}[(i)]
\item $\close{\tr \conn[c]} = \divv$,
\item $\close{\conn[c]} = \tilde{\conn}$,
\item $\Ck[c]{\infty}(\cV)$ is dense in $\dom(\tilde{\conn})$,
\item $\Ck[c]{\infty}(\cotanb \cM \tensor \cV)$ is dense in $\dom(\divv)$.
\end{enumerate}
\end{proposition}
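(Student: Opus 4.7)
The plan is to exploit the adjoint relation established by the identity $(\star)$: the operators $\conn[c]$ and $-\tr\conn[c]$ are adjoint to one another in the sense defined in \S2.2, so by the maximality of the adjoint they satisfy $\conn[c] \subset \tilde{\conn}$ and $-\tr\conn[c] \subset -\divv$. Since $\tilde{\conn}$ and $\divv$ are automatically closed (being adjoints of densely-defined operators), one gets the automatic inclusions $\close{\conn[c]} \subset \tilde{\conn}$ and $\close{\tr\conn[c]} \subset \divv$. Each of (i) and (ii) is the reverse of one of these inclusions, and (iii) and (iv) are nothing but the standard reformulations of these reverse inclusions as density statements. Thus the task reduces to recognising a symmetry under ``taking adjoints''.

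For (i) $\Leftrightarrow$ (ii), I would use three facts from the operator theory section: first, for a densely-defined closable operator $T$, $\biadj{T} = \close{T}$; second, $\adj{\close{T}} = \adj{T}$; and third, $\adj{(-T)} = -\adj{T}$. Starting from (i), $\divv = \close{\tr\conn[c]}$, take adjoints on both sides. The left side becomes $\adj{\divv} = -\biadj{\conn[c]} = -\close{\conn[c]}$ using $-\divv = \adj{\conn[c]}$, while the right becomes $\adj{(\close{\tr\conn[c]})} = \adj{\tr\conn[c]} = -\adj{(-\tr\conn[c])} = -\tilde{\conn}$. Cancelling signs yields (ii). The reverse direction is symmetric: take adjoints of $\close{\conn[c]} = \tilde{\conn}$ and apply the same identifications to recover $\divv = \close{\tr\conn[c]}$.

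The equivalences (ii) $\Leftrightarrow$ (iii) and (i) $\Leftrightarrow$ (iv) are essentially tautological and use only the definition of the graph norm. For the former, since $\Ck[c]{\infty}(\cV) \subset \dom(\tilde{\conn})$ with $\tilde{\conn}$ restricted to this subspace equal to $\conn[c]$, density of $\Ck[c]{\infty}(\cV)$ in $\dom(\tilde{\conn})$ under the graph norm $\norm{\mdot}_{\tilde{\conn}}$ amounts to the statement that every element of the graph of $\tilde{\conn}$ is a limit of graph elements of $\conn[c]$, i.e., $\tilde{\conn} \subset \close{\conn[c]}$; combined with the reverse inclusion this is (ii). An identical argument handles (i) $\Leftrightarrow$ (iv). There is no genuine obstacle in this proposition: the content is purely a rearrangement of the abstract adjoint/closure machinery, and the only bookkeeping care needed is the sign convention in $-\divv = \adj{\conn[c]}$, which is harmless because scalar multiplication by $-1$ commutes with the adjoint. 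The substantive analytic work is reserved for subsequent sections, where one of these four equivalent conditions is verified under concrete geometric hypotheses via \thmref{Thm:Main}.
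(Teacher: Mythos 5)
Your proposal is correct and follows essentially the same route as the paper: the equivalence of (i) and (ii) is obtained by computing the adjoint of $\divv$ (equivalently of $\close{\tr\conn[c]}$) in two ways using $\biadj{T}=\close{T}$ and $\adj{\close{T}}=\adj{T}$, and the equivalences with (iii) and (iv) are read off from the identification of graph-norm density with equality of closures. The sign bookkeeping via $\adj{(-T)}=-\adj{T}$ is handled correctly, so there is nothing to add.
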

\begin{proof}
Suppose that $\close{\tr \conn[c]} = \divv$.
Since $\close{\conn[c]}$ is closed and densely-defined,
$\adj{(-\divv)} = \biadj{\conn[c]} = \close{\conn[c]}$. 
Also, 
$\adj{(-\divv)} = \adj{\close{\tr \conn[c]}} = \adj{(\tr \conn[c])} = \tilde{\conn}$. 
Thus (i) implies (ii).
By similar reasoning, (ii) implies (i).
Now, we note that $\close{\conn[c]} = \tilde{\conn}$
if and only if $\Ck[c]{\infty}(\cV)$ is
dense in $\dom(\tilde{\conn})$
since $\conn[c] = \tilde{\conn}$ on $\Ck[c]{\infty}(\cV)$.
Thus, (ii) and (iii) are equivalent.
By similar argument, (iv) and (i) are equivalent to 
each other. This concludes the proof.
\end{proof}

We also consider the following
self-adjoint operators
$\BLap = -\divv \close{\conn[c]}$ with domain 
$$\dom(\BLap) = \set{u \in \Lp{2}(\cV): u \in \dom(\close{\conn[c]}),\ 
\close{\conn[c]} u \in \dom(\divv)}  \subset \dom(\close{\conn[c]})$$
and $\ALap = -\close{\tr \conn[c]} \tilde{\conn}$
with domain 
$$\dom(\ALap) = \set{u \in \Lp{2}(\cV): u \in \dom(\tilde{\conn}),\ 
\tilde{\conn}u \in \dom(\close{\tr \conn[c]})} \subset \dom(\tilde{\conn}).$$

These are two \emph{Laplacians} on the vector bundle.
Furthermore, 
by (\ref{Eqn:Adj}), 
we find that the \emph{connection Laplacian} $\Lap_c = -\tr\conn[c]^2$
with domain $\Ck[c]{\infty}(\cV)$
is densely-defined and closable.
Let $\Lap_0$ denote the closure of $\Lap_c$
and denote its domain by $\dom(\Lap_0)$.
Certainly, it is easy to see that $\Lap_0 \subset \BLap$
and $\Lap_0 \subset \ALap$
and we have the
following proposition which lists
some equivalences.

\begin{proposition}
\label{Prop:VB:Eq1}
Under the same hypotheses as Proposition
\ref{Prop:VB:Eq}, the following are equivalent: 
\begin{enumerate}[(i)]
\item $\Lap_0$ is self-adjoint,
\item $\Lap_0 = \BLap = \ALap$,
\item $\Ck[c]{\infty}(\cV)$ is dense in $\dom(\BLap)$,
\item $\Ck[c]{\infty}(\cV)$ is dense in $\dom(\ALap)$.
\end{enumerate}
\end{proposition}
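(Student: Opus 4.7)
The plan is to prove the equivalences by exploiting the inclusions $\Lap_0 \subset \BLap$ and $\Lap_0 \subset \ALap$ already recorded in the text, together with the self-adjointness of $\BLap$ and $\ALap$ and the maximality result of Proposition \ref{Prop:Sym}.

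First I would dispose of (i) $\Leftrightarrow$ (ii). The implication (ii) $\Rightarrow$ (i) is immediate, since $\Lap_0 = \BLap$ inherits self-adjointness from $\BLap$. For the converse, if $\Lap_0$ is self-adjoint, then $\BLap$ and $\ALap$ are each symmetric extensions of $\Lap_0$, and applying Proposition \ref{Prop:Sym} with $T = \Lap_0$ and $S$ taken successively as $\BLap$ and $\ALap$ yields $\Lap_0 = \BLap = \ALap$.

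For (ii) $\Leftrightarrow$ (iii), the implication (ii) $\Rightarrow$ (iii) unfolds by definition: $\Lap_0$ is the graph-norm closure of $\Lap_c$ with domain $\Ck[c]{\infty}(\cV)$, so $\Lap_0 = \BLap$ is exactly the assertion that $\Ck[c]{\infty}(\cV)$ is dense in $\dom(\BLap)$. For the converse, given $u \in \dom(\BLap)$ and a sequence $u_j \in \Ck[c]{\infty}(\cV)$ with $u_j \to u$ and $\BLap u_j \to \BLap u$, the identity $\BLap u_j = \Lap_c u_j$ on $\Ck[c]{\infty}(\cV)$ forces $u \in \dom(\close{\Lap_c}) = \dom(\Lap_0)$ with $\Lap_0 u = \BLap u$; combined with $\Lap_0 \subset \BLap$ this gives $\Lap_0 = \BLap$, whence $\Lap_0$ is self-adjoint and Proposition \ref{Prop:Sym} then upgrades this to $\ALap = \Lap_0$. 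The equivalence (ii) $\Leftrightarrow$ (iv) follows by the same argument with $\ALap$ replacing $\BLap$.

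I do not anticipate any analytic obstacle; the argument is essentially a bookkeeping exercise threading the definitions together. The one subtlety worth flagging is that density of $\Ck[c]{\infty}(\cV)$ in just one of $\dom(\BLap)$ or $\dom(\ALap)$ is already enough to force equality of \emph{all three} operators: density delivers the first equality directly, and Proposition \ref{Prop:Sym} then propagates the resulting self-adjointness of $\Lap_0$ to identify the remaining Laplacian with it.
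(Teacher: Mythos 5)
Your proof is correct and follows essentially the same route as the paper: both rely on the inclusions $\Lap_0 \subset \BLap$, $\Lap_0 \subset \ALap$, the self-adjointness of $\BLap$ and $\ALap$, and Proposition \ref{Prop:Sym}, with density in the graph norm unpacked via the definition of $\Lap_0$ as the closure of $\Lap_c$. The only difference is organizational (you prove pairwise equivalences through (ii) rather than a cycle of implications), which changes nothing of substance.
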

\begin{proof}
If $\Lap_0$ is self-adjoint, then 
since $\Lap_0 \subset \BLap$ and $\Lap_0 \subset \ALap$ where 
$\BLap$ and $\ALap$
 are in particular symmetric, 
Proposition \ref{Prop:Sym} allows us to conclude that 
$\Lap_0 = \BLap$ and $\Lap_0 = \ALap$.
Thus, (i) implies (ii).
It is easy to see that (ii) implies (iii)
since $\Lap_0 = \BLap$ on $\Ck[c]{\infty}(\cV)$.
Similarly, (ii) implies (iv).
If (iv) holds, then $\Lap_0 = \ALap$
and $\ALap$ is self adjoint
and so (iv) implies (i).
Similarly, (iii) implies (i) and this concludes
the proof.
\end{proof}

We also define the following objects that we call
\emph{co-Laplacians}.  
First, define $\coLap = -\close{\conn[c]}\divv$ with domain
$$\dom(\coLap) = \set{u \in \Lp{2}(\cotanb\cM \tensor \cV): u \in \dom(\divv),\ 
\divv u \in \dom(\close{\conn[c]})}
\subset \dom(\divv).$$
Also, let $\tilde{\coLap} = - \tilde{\conn}\close{\tr \conn[c]}$
with domain
$$\dom(\tilde{\coLap}) = \set{u \in \Lp{2}(\cotanb\cM \tensor \cV): u \in \dom(\close{\tr\conn[c]}),\ 
\close{\tr\conn[c]} u \in \dom(\tilde{\conn})}
\subset \dom(\close{\tr\conn[c]}).$$
It is easy to see that both these operators are self-adjoint.
The corresponding \emph{connection co-Laplacian}
is then given by $\coLap_c$ 
with domain $\Ck[c]{\infty}(\cotanb\cM \tensor \cV)$. 
As for the connection Laplacian, the condition  (\ref{Eqn:Adj})
implies that $\coLap_c$ is a densely-defined, closable operator.
Thus, let $\coLap_0$ denote the closure
with domain $\dom(\coLap_0)$. It is
easy to see that $\coLap_0 \subset \coLap$
and $\coLap_0 \subset \tilde{\coLap}$. 
We have the following list of
equivalences whose proof 
is similar to that of the
analogous proposition we proved
for the Laplacians.

\begin{proposition}
\label{Prop:VB:Eq2}
Under the same hypotheses as Proposition
\ref{Prop:VB:Eq},
the following are equivalent: 
\begin{enumerate}[(i)]
\item $\coLap_0$ is self-adjoint,
\item $\coLap_0 = \coLap = \tilde{\coLap}$,
\item $\Ck[c]{\infty}(\cotanb\cM \tensor \cV)$ is dense in $\dom(\coLap)$,
\item $\Ck[c]{\infty}(\cotanb\cM \tensor \cV)$ is dense in $\dom(\tilde{\coLap})$.
\end{enumerate}
\end{proposition}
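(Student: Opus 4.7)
The strategy is to mirror the proof of Proposition \ref{Prop:VB:Eq1}, with $\BLap$ and $\ALap$ replaced by $\coLap$ and $\tilde{\coLap}$. Two ingredients drive the argument: the self-adjointness of both $\coLap$ and $\tilde{\coLap}$, and the fact that $\coLap_0$ is by construction the closure of $\coLap_c$ on $\Ck[c]{\infty}(\cotanb\cM \tensor \cV)$. On compactly supported smooth sections the compositions defining $\coLap$ and $\tilde{\coLap}$ agree classically with $\coLap_c$, so that $\coLap_0 \subset \coLap$ and $\coLap_0 \subset \tilde{\coLap}$, as already noted in the excerpt.

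For the self-adjointness of $\coLap$ and $\tilde{\coLap}$, I would invoke the classical von Neumann theorem that $A\adj{A}$ is self-adjoint for any closed densely-defined operator $A$. Taking $A = \close{\conn[c]}$, whose adjoint is $-\divv$, yields self-adjointness of $\coLap = -\close{\conn[c]}\divv$; taking $A = \tilde{\conn}$, whose adjoint is $-\close{\tr\conn[c]}$, yields self-adjointness of $\tilde{\coLap}$.

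With these in hand, (i) $\Rightarrow$ (ii) is immediate from Proposition \ref{Prop:Sym}: if $\coLap_0$ is self-adjoint, then its symmetric extensions $\coLap$ and $\tilde{\coLap}$ must coincide with it, giving $\coLap_0 = \coLap = \tilde{\coLap}$. The implications (ii) $\Rightarrow$ (iii) and (ii) $\Rightarrow$ (iv) follow from the defining property of $\coLap_0$ as the closure of $\coLap_c$, since the equality $\coLap_0 = \coLap$ (resp.\ $\coLap_0 = \tilde{\coLap}$) is, by definition of the closure, exactly density of $\Ck[c]{\infty}(\cotanb\cM \tensor \cV)$ in $\dom(\coLap)$ (resp.\ $\dom(\tilde{\coLap})$) in the operator norm. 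Conversely, if $\Ck[c]{\infty}(\cotanb\cM \tensor \cV)$ is dense in $\dom(\coLap)$, then $\coLap$ is a closed extension of $\coLap_c$ whose domain is the operator-norm closure of $\Ck[c]{\infty}(\cotanb\cM \tensor \cV)$, forcing $\coLap = \coLap_0$; thus $\coLap_0$ inherits self-adjointness, proving (iii) $\Rightarrow$ (i). The implication (iv) $\Rightarrow$ (i) is identical.

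The only mildly subtle step is the verification of self-adjointness of $\coLap$ and $\tilde{\coLap}$, which demands correct bookkeeping of the natural domains of $A\adj{A}$ against the domains stipulated in the excerpt; once that is in place, the remaining equivalences reduce to a formal chase through Proposition \ref{Prop:Sym} and the definition of $\coLap_0$.
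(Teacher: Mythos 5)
Your proposal is correct and follows essentially the same route as the paper, which simply states that the proof is analogous to that of Proposition~\ref{Prop:VB:Eq1}: the equivalences are a formal chase through Proposition~\ref{Prop:Sym}, the inclusions $\coLap_0 \subset \coLap$, $\coLap_0 \subset \tilde{\coLap}$, and the definition of $\coLap_0$ as the closure of $\coLap_c$. Your explicit appeal to von Neumann's theorem on $A\adj{A}$ correctly fills in the self-adjointness of $\coLap$ and $\tilde{\coLap}$, which the paper dismisses as ``easy to see.''
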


We now prove the main result of this section.
We remark that as a consequence of this theorem,
each of the statements of Propositions
\ref{Prop:VB:Eq}, \ref{Prop:VB:Eq1}, 
and \ref{Prop:VB:Eq2} hold. 

\begin{theorem}
\label{Thm:VBdense}
Let $\cM$ be a smooth, complete Riemannian manifold
with metric $\mg$
and let $\cV$ be a smooth vector bundle 
over $\cM$ with smooth metric $\mh$ and connection $\conn$. 
Suppose that $\mh$ and $\conn$ are compatible.
Then, $\close{\conn[c]} = \tilde{\conn}$
and $\Lap_0$ and $\coLap_0$ are 
self-adjoint.
\end{theorem}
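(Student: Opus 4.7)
The strategy suggested by the paper's philosophy is to package $\conn[c]$ and $-\tr \conn[c]$ into a single symmetric first-order operator and invoke Theorem \ref{Thm:Main}. I would set $\cW = \cV \dsum (\cotanb\cM \tensor \cV)$, equipped with the orthogonal direct sum of the natural hermitian metrics, and define
\begin{equation*}
\Pi = \begin{pmatrix} 0 & -\tr \conn \\ \conn & 0 \end{pmatrix}
\end{equation*}
on $\Ck{\infty}(\cW)$, with $\Pi_c$ its restriction to $\Ck[c]{\infty}(\cW)$. The identity (\ref{Eqn:Adj}) is precisely the assertion that $\Pi_c$ is symmetric on $\Lp{2}(\cW) = \Lp{2}(\cV) \dsum \Lp{2}(\cotanb\cM \tensor \cV)$.

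To invoke Theorem \ref{Thm:Main}, I would verify uniform bounded speed of propagation. Since $\symb_{\conn}(x,v)e = v \tensor e$ has fibrewise norm $\modulus{v}$ and $\symb_{-\tr \conn}(x,v)$ is its fibrewise adjoint (which has the same norm), the off-diagonal symbol of $\Pi$ has operator norm $\modulus{v}$, so $\cp_{\Pi}(x) = 1$ for every $x \in \cM$. Theorem \ref{Thm:Main} then guarantees that every power of $\Pi_c$ is essentially self-adjoint on $\Lp{2}(\cW)$.

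The remainder is book-keeping on block operators. Because $\Pi_c$ is off-diagonal, computing adjoints and closures entrywise on $\Ck[c]{\infty}(\cV) \dsum \Ck[c]{\infty}(\cotanb\cM \tensor \cV)$ yields
\begin{equation*}
\close{\Pi_c} = \begin{pmatrix} 0 & -\close{\tr \conn[c]} \\ \close{\conn[c]} & 0 \end{pmatrix}, \qquad
\adj{\Pi_c} = \begin{pmatrix} 0 & -\divv \\ \tilde{\conn} & 0 \end{pmatrix},
\end{equation*}
and the identity $\close{\Pi_c} = \adj{\Pi_c}$ forced by essential self-adjointness gives $\close{\conn[c]} = \tilde{\conn}$ (and simultaneously $\close{\tr \conn[c]} = \divv$, recovering the equivalences of Proposition \ref{Prop:VB:Eq}). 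Applying the same reasoning to $\Pi_c^2$, a direct computation on $\Ck[c]{\infty}(\cW)$ gives
\begin{equation*}
\Pi_c^2 = \begin{pmatrix} \Lap_c & 0 \\ 0 & \coLap_c \end{pmatrix},
\end{equation*}
and essential self-adjointness of $\Pi_c^2$, by the analogous block-diagonal closure argument, is equivalent to simultaneous self-adjointness of $\Lap_0$ and $\coLap_0$.

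The main obstacle is conceptual rather than technical: recognising that the correct auxiliary object encoding both $\conn[c]$ and $-\tr \conn[c]$ is the off-diagonal block operator $\Pi$, together with the observation that its square is block-diagonal in the two Laplacians of interest. Once the construction is in hand, the verification of the hypotheses of Theorem \ref{Thm:Main} reduces to the trivial symbol computation above, and the extraction of the componentwise statements from the block identities is straightforward from the fact that the graph norm of a block operator decomposes as a sum of componentwise graph norms.
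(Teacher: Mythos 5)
Your proposal is correct and follows essentially the same route as the paper: the same off-diagonal block operator $\Pi$ on $\Lp{2}(\cV) \dsum \Lp{2}(\cotanb\cM \tensor \cV)$, symmetry of $\Pi_c$ via (\ref{Eqn:Adj}), the unit propagation speed computation, Theorem \ref{Thm:Main}, and the same block-wise extraction of $\close{\conn[c]} = \tilde{\conn}$ and the self-adjointness of $\Lap_0$ and $\coLap_0$ from $\Pi_c$ and $\Pi_c^2$. The only cosmetic difference is that you identify the symbol of $-\tr\conn$ as the fibrewise adjoint of that of $\conn$ rather than computing both entries explicitly as the paper does.
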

\begin{proof}
Let $\Hil = \Lp{2}(\cV) \oplus \Lp{2}(\cotanb \cM \tensor \cV)$ 
and define
$$ 
\Pi = \begin{pmatrix} 
	0 & -\tr \conn \\
	\conn & 0 
	\end{pmatrix}$$
with domain $\dom(\Pi) = \Ck{\infty}(\cV) \oplus \Ck{\infty}(\cotanb \cM \tensor \cV)$.
Let $\Pi_c$ denote $\Pi$ with domain $\dom(\Pi_c) = \Ck[c]{\infty}(\cV) \oplus \Ck[c]{\infty}(\cotanb \cM \tensor \cV).$
First, we show that
$\inprod{\Pi_c u, v} = \inprod{u, \Pi_c v}$,
for $u, v \in \dom(\Pi_c)$.
Let $u = (u_1, u_2), v = (v_1, v_2) \in \dom(\Pi_c)$.
Then,
$\Pi_c u = (-\tr \conn[c] u_2, \conn[c] u_1)$
and $\Pi_c v = (-\tr \conn[c] v_2, \conn[c] v_1)$.
Therefore,
\begin{align*}
\inprod{\Pi_c u, v} &= \inprod{-\tr \conn[c] u_2, v_1} + \inprod{\conn[c] u_1, v_2},\ \text{and} \\ 
\inprod{u, \Pi_c v} &= \inprod{u_1, -\tr \conn[c] v_2} + \inprod{u_2, \conn[c] v_1}.
\end{align*}
But, by (\ref{Eqn:Adj}),
$$\inprod{-\tr \conn[c] u_2, v_1} = \inprod{u_2, \conn[c] v_1}
\quad\text{and}\quad
\inprod{\conn[c] u_1, v_2} = \inprod{u_1, -\tr \conn[c] v_2}.$$
Therefore,
$\inprod{\Pi_c u, v} = \inprod{u, \Pi_c v}.$

Next, we compute the symbol of $\Pi$. 
Fix $x \in \cM$ and $v \in \cotanb_x\cM$ and
$e = (e_1,e_2) \in \cV_x \oplus \cotanb_x\cM \tensor \cV_x$.
Let $f \in \Ck{\infty}(\cV) \oplus \Ck{\infty}(\cotanb \cM \tensor\cV)$
such that $f(x) = e$, and 
$g \in \Ck{\infty}(\cM)$ such that $\conn g = v$.
Recall the following product rules
for $\conn$ and $\tr \conn$: 
$$\conn(gf) = \conn g \tensor f + g \conn f,\quad\text{and}\quad
\tr \conn (gf) = g \tr \conn v + \tr(\conj{\conn g} \tensor f)$$
where by $\conj{\conn g}$ we mean complex conjugation.
Write $f = (f_1, f_2)$. Then,
$$g  \Pi f = g  (-\tr \conn f_2, \conn f_1)$$
and
$$ \Pi (gf)
	=  (-g \tr\conn f_2 - \tr (\conj{\conn g} \tensor f_2), \conn g \tensor f_1 + g \conn f_1).$$
Thus,
\begin{multline*}
\symb(x,v)e 
	=  \Pi(gf)(x) - (g  \Pi f)(x)  \\
	=  (- \tr(\conj{\conn g(x)} \tensor f_2(x), \conn g(x) \tensor f_1(x))
	=  (-\tr(\conj{v} \tensor e_2), v \tensor e_1)
\end{multline*}
since $f(x) = (e_1, e_2)$.
Therefore,
\begin{multline*}
\modulus{\symb(x,v)e}^2 
	= \modulus{-\tr (\conj{v} \tensor e_2)}^2 + \modulus{v \tensor e_1}^2
	\leq \modulus{v}^2 \modulus{e_2}^2 + \modulus{v}^2 \modulus{e_1}^2 \\
	= \modulus{v}^2 (\modulus{e_2}^2 + \modulus{e_1}^2)
	= \modulus{v}^2 \modulus{e}^2.
\end{multline*}
From this, it is easy to see that
$\modulus{\symb(x,v)} = \modulus{v}$ 
making $\cp(x) = 1$.

These facts demonstrate that
the hypotheses of Theorem \ref{Thm:Main}
are satisfied for $\Pi$ and so we conclude that $\Pi_c$
is essentially self-adjoint.
Letting $\Pi_0$ 
denote the closure of $\Pi_c$,
note that $$
\Pi_0  = \begin{pmatrix}
	0 & -\close{\tr\conn[c]} \\
	\close{\conn[c]} & 0
	\end{pmatrix}
\quad\text{and}\quad
\adj{\Pi}_c = \begin{pmatrix}
	0 & -\divv \\
	\tilde{\conn} & 0
	\end{pmatrix}.
$$
By the self-adjointness of $\Pi_0$,
we find that 
$\adj{\Pi}_c = \adj{\close{\Pi_0}} = \adj{\Pi_0} = \Pi_0$.
Thus,
$\close{\conn[c]} = \tilde{\conn}$ and
$\close{\tr\conn[c]} = \divv$.

Also, we have that 
$$\Pi_c^2 
	=  \begin{pmatrix}
	-\tr \conn[c]^2 & 0 \\ 
	0 & - \conn[c] \tr \conn[c]
	\end{pmatrix}
	= \begin{pmatrix}
	\Lap_c & 0 \\ 
	0 & \coLap_c
	\end{pmatrix}.$$
By Theorem 2.3, $\Pi_c^2$ is essentially self-adjoint, 
which implies that $\close{\Lap_c} = \Lap_0$
and $\close{\coLap_c} = \coLap_0$ 
are also self-adjoint.
\end{proof}

\begin{remark}
When $\cV$ is $\cM \times \C$ 
(so that the sections of this bundle are measurable functions), 
the self-adjointness of $\Lap_0$
on $\Lp{2}(\cM)$ 
under the assumption that $\cM$
is complete is a well known fact.
It was stated in \cite{Ch}
as an application, and 
the author of this paper
points to \cite{Gaff} by Gaffney
and \cite{Roel} by Roelcke
as historical references.
\end{remark} 

Denote the $(p,q)$
tensors on $\cM$ by
$\Tensors[p,q]\cM$
where $p$ is the contravarient 
index and $q$ the covarient index.
We point out the following immediate
consequence of this theorem 
for the tensor Laplacian and co-Laplacian. 

\begin{corollary}
Let $\cM$ be a smooth, complete Riemannian manifold
with metric $\mg$ and connection $\conn$
that are compatible. Then, $\Ck[c]{\infty}(\Tensors[p,q]\cM)$ is
dense in $\dom(\BLap)$ and
$\Ck[c]{\infty}(\Tensors[p,q+1]\cM)$
is dense in $\dom(\coLap)$.
\end{corollary}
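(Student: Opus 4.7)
The plan is to apply Theorem \ref{Thm:VBdense} to the vector bundle $\cV = \Tensors[p,q]\cM$, equipped with the bundle metric and connection induced naturally from $\mg$ and $\conn$, and then read off the two density statements from Propositions \ref{Prop:VB:Eq1} and \ref{Prop:VB:Eq2}.

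First I would set up the required structure on $\Tensors[p,q]\cM$. The fibrewise metric $\mh$ is defined by pairing each tangent factor with $\mg$ and each cotangent factor with the dual metric induced by $\mg$; since $\mg$ is smooth so is $\mh$, and it is hermitian after complexifying. The Levi-Cevita connection $\conn$ extends canonically to $\Tensors[p,q]\cM$ by prescribing its action on $\cotanb\cM$ through the duality relation with $\tanb\cM$ and then imposing the Leibniz rule across tensor products. Metric compatibility of the extended connection with $\mh$ follows from the compatibility of $\conn$ with $\mg$ on $\tanb\cM$, propagating through the tensor factors via the product rule.

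With these structures in place the hypotheses of Theorem \ref{Thm:VBdense} are satisfied for $\cV = \Tensors[p,q]\cM$, so $\Lap_0$ and $\coLap_0$ on the corresponding $\Lp{2}$ spaces are self-adjoint. The equivalence (i)$\Leftrightarrow$(iii) in Proposition \ref{Prop:VB:Eq1} then converts the self-adjointness of $\Lap_0$ into density of $\Ck[c]{\infty}(\Tensors[p,q]\cM)$ in $\dom(\BLap)$, while Proposition \ref{Prop:VB:Eq2} gives density of $\Ck[c]{\infty}(\cotanb\cM \tensor \Tensors[p,q]\cM)$ in $\dom(\coLap)$. Under the canonical isomorphism $\cotanb\cM \tensor \Tensors[p,q]\cM \cong \Tensors[p,q+1]\cM$, which absorbs the extra cotangent factor as a new covariant slot, the latter becomes the second density claim.

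There is no real obstacle here, since all the analytic content is already contained in Theorem \ref{Thm:VBdense}; the only point worth checking carefully is that the induced connection on tensor powers remains compatible with the induced tensor metric, and this is a standard consequence of compatibility on $\tanb\cM$.
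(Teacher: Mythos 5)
Your proposal is correct and matches the paper's approach exactly: the paper offers no explicit proof, treating the corollary as an immediate specialisation of Theorem \ref{Thm:VBdense} to $\cV = \Tensors[p,q]\cM$ with the induced metric and connection, combined with Propositions \ref{Prop:VB:Eq1} and \ref{Prop:VB:Eq2} and the identification $\cotanb\cM \tensor \Tensors[p,q]\cM \isomorphic \Tensors[p,q+1]\cM$, which is precisely what you wrote. One small caveat: the corollary does not assume $\conn$ is the Levi-Cevita connection (the paper's subsequent remark explicitly allows torsion), but your extension-by-Leibniz argument never uses torsion-freeness, so nothing is affected.
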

\begin{remark}
We emphasise that
in the corollary, we do 
not rule out the possibility that
$\conn$ has torsion.
That is, 
the relationship $\conn[X]Y - \conn[Y]X = [X,Y]$
(where $[\mdot,\mdot]$ denotes the Lie
derivative)
may fail in general for
some $X, Y \in \Ck{\infty}(\tanb\cM)$.
\end{remark}

Next, we show some consequences of Theorem \ref{Thm:VBdense}
to Sobolev spaces on vector bundles.
Fix $k \in \Na$ (although we only really deal with $k = 1,2$). 
Let $S_k$ be the set of $u \in \Ck{\infty}(\cV) \intersect \Lp{2}(\cV)$
such that $\conn^i u \in \Ck{\infty}(\cV) \intersect \Lp{2}(\Tensors[0,i] \cM \tensor \cV)$
for $1 \leq i \leq k$.
Then, the Sobolev norm is defined as
$\norm{u}_{\Sob{k,2}} = \norm{u} + \sum_{i=1}^k \norm{\conn^i u}$
for $u \in S_k$. 
We define the Sobolev spaces $\Sob{k,2}(\cV)$
as the completion of $S_k$
under this norm. The Sobolev spaces $\Sob[0]{k,2}(\cV)$
are defined as the completion of $\Ck[c]{\infty}(\cV)$
under the same norm.

Let $\conn[2]:S_1 \to 
\Ck{\infty}\intersect \Lp{2}(\cotanb\cM \tensor\cV)$ be
defined by $\conn[2] = \conn$. Then, we
have the following operator theoretic
characterisations of $\Sob{1,2}(\cV)$ and
$\Sob[0]{1,2}(\cV)$.

\begin{corollary}
\label{Cor:Sob12}
Under the same hypotheses as Theorem \ref{Thm:VBdense},
$\conn[2]$ is a densely-defined, 
closeable operator and
$\Sob{1,2}(\cV) = \dom(\close{\conn[2]})$.
Furthermore, $\Sob[0]{1,2}(\cV) = \dom(\close{\conn[c]})$ and 
$\Sob[0]{1,2}(\cV) = \Sob{1,2}(\cV)$.
\end{corollary}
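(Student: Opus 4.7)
The plan is to deduce the corollary directly from Theorem \ref{Thm:VBdense} by identifying each Sobolev space with the domain of a closed extension of $\conn$ in the graph norm.

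First, I would check that $\conn[2]$ is densely-defined and closable. Density is immediate because $\Ck[c]{\infty}(\cV) \subset S_1$ and $\Ck[c]{\infty}(\cV)$ is dense in $\Lp{2}(\cV)$. For closability, I would show the stronger inclusion $\conn[2] \subset \tilde{\conn}$. Given $u \in S_1$ and $v \in \Ck[c]{\infty}(\cotanb\cM \tensor \cV)$, both sides of the identity $\inprod{\conn u,v} = \inprod{u,-\tr\conn v}$ make sense, and the identity itself follows from (\ref{Eqn:Adj}) applied on any relatively compact open set containing $\spt v$, since $u$ is smooth there and $v$ vanishes at the boundary (alternatively, multiply $u$ by a cutoff and pass to the limit using that the cutoff can be chosen to be $1$ on $\spt v$). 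Hence $u \in \dom(\adj{(-\tr\conn[c])}) = \dom(\tilde{\conn})$ with $\tilde{\conn}u = \conn u$, so $\conn[2] \subset \tilde{\conn}$. Because $\tilde{\conn}$ is closed, $\conn[2]$ is closable with $\close{\conn[2]} \subset \tilde{\conn}$.

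Next, I would sandwich $\close{\conn[2]}$ between $\close{\conn[c]}$ and $\tilde{\conn}$. Since $\Ck[c]{\infty}(\cV) \subset S_1$ we have $\conn[c] \subset \conn[2]$, and therefore $\close{\conn[c]} \subset \close{\conn[2]} \subset \tilde{\conn}$. Theorem \ref{Thm:VBdense} supplies $\close{\conn[c]} = \tilde{\conn}$, forcing all three operators to coincide.

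Finally, I would match the Sobolev spaces with these operator domains. The norm $\norm{u}_{\Sob{1,2}} = \norm{u} + \norm{\conn u}$ on $S_1$ coincides with the operator norm $\norm{\cdot}_{\conn[2]}$. Since $\conn[2]$ is closable, the map $u \mapsto (u,\conn u)$ identifies $(S_1,\norm{\cdot}_{\Sob{1,2}})$ isometrically with the subset $\graph(\conn[2]) \subset \Hil \times \Hil$ where $\Hil = \Lp{2}(\cV) \oplus \Lp{2}(\cotanb\cM\tensor\cV)$; the completion is therefore the closure of this graph, which is $\graph(\close{\conn[2]})$, naturally identified with $\dom(\close{\conn[2]})$ under the graph norm. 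The same argument with $\Ck[c]{\infty}(\cV)$ in place of $S_1$ gives $\Sob[0]{1,2}(\cV) = \dom(\close{\conn[c]})$. Combined with the equality $\close{\conn[c]} = \close{\conn[2]}$ from the previous step, this yields $\Sob[0]{1,2}(\cV) = \Sob{1,2}(\cV)$.

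The only step that takes any care is the integration-by-parts argument showing $\conn[2] \subset \tilde{\conn}$, since $u \in S_1$ is not compactly supported and (\ref{Eqn:Adj}) is stated only for compactly supported smooth sections. Everything else is operator-theoretic bookkeeping built on Theorem \ref{Thm:VBdense}.
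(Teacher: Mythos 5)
Your proof is correct and follows essentially the same route as the paper: establish $\inprod{\conn u,v}=\inprod{u,-\tr\conn v}$ for $u\in S_1$ and compactly supported $v$ (the paper cites the compact support of $x\mapsto \mh(u(x),-\tr\conn v(x))$ and Proposition 6.1 of \cite{BMc}, where you spell out the cutoff argument), deduce $\close{\conn[c]}\subset\close{\conn[2]}\subset\tilde{\conn}$, invoke Theorem \ref{Thm:VBdense} to collapse the chain, and identify the completions with operator domains via the graph norm. No substantive differences.
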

\begin{proof}
Fix $u \in S_1$ and $v \in \Ck[c]{\infty}(\cotanb\cM \tensor \cV)$.
Since $x \mapsto \mh(u(x), -\tr\conn v(x))$ is compactly 
supported, an argument similar to that of the proof of Proposition 6.1 in \cite{BMc}
shows that 
$\inprod{\conn[2]u, v} = \inprod{u, -\tr\conn v}$.
This shows that $\conn[2]$ is densely-defined and closeable.
Furthermore, it shows that $\close{\conn[2]} \subset \tilde{\conn}$, 
and it is easy to see that $\close{\conn[c]} \subset \close{\conn[2]}$.
Then by Theorem \ref{Thm:VBdense}, we have that
$\close{\conn[c]} = \tilde{\conn} = \close{\conn[2]}$.

Since $\conn[2] = \conn$ on $S_1$,
it is easy to see that $\Sob{1,2}(\cV) = \dom(\close{\conn[2]})$.
We find that $\Sob[0]{1,2}(\cV) = \dom(\close{\conn[c]})$ for
a similar reason, and this concludes the proof.
\end{proof}
\begin{remark}
\begin{enumerate}[(i)]

\item In this generality, 
do not know whether $\Sob[0]{k,2}(\cV) = \Sob{k,2}(\cV)$
for $k \geq 2$. 
In fact, even for the case $\cV = \cM \times \C$
with $\conn$ being the Levi-Cevita connection,
we use assumptions on curvature
to show that $\Sob[0]{2,2}(\cM) = \Sob{2,2}(\cM)$.
We deal with this situation in \S\ref{Sect:H2}.
The case for $k > 2$ is considered 
in Proposition 3.2 in \cite{Hebey}.

\item A more general version of this result is true.
An alternative argument can be used to 
dispense the compatibility condition.  
See Proposition 2.3 in \cite{BMc}. 
\end{enumerate} 
\end{remark}

Next, define
$\Xi_k: S_k 
\to \oplus_{i=1}^k \Ck{\infty}\intersect \Lp{2}(\Tensors[0,i]\cM \tensor \cV)$
by $\Xi_k u = (\conn^i u)_{i = 1}^k$.
The following proposition characterises
Sobolev spaces in terms of this operator.

\begin{proposition}
\label{Prop:Xi}
Under the same hypothesis as Theorem \ref{Thm:VBdense},
the operator $\Xi_k$ is closeable and 
$\Sob{k,2}(\cV) = \dom(\close{\Xi_k})$.
\end{proposition}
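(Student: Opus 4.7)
The plan is to first establish closeability via an iterated integration-by-parts argument, and then to identify $\dom(\close{\Xi_k})$ with $\Sob{k,2}(\cV)$ by recognising that the graph norm of $\Xi_k$ coincides with the Sobolev norm on $S_k$, so that both spaces are the same abstract completion.

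For closeability, suppose $u_n \in S_k$ with $u_n \to 0$ in $\Lp{2}(\cV)$ and $\Xi_k u_n \to (v_1, \dots, v_k)$ in $\bigoplus_{i=1}^k \Lp{2}(\Tensors[0,i]\cM \tensor \cV)$. I want to show $v_i = 0$ for each $i$. Fix $\phi \in \Ck[c]{\infty}(\Tensors[0,i]\cM \tensor \cV)$ and iterate the adjoint relation (\ref{Eqn:Adj}) a total of $i$ times, exactly as in the proof of Corollary~\ref{Cor:Sob12}: at each step the intermediate section $\conn^j u_n$ lies in $\Ck{\infty} \cap \Lp{2}$ of the appropriate tensor bundle by the definition of $S_k$, and the pairing against the compactly supported iterated divergence of $\phi$ legitimises the formula even though $u_n$ itself is not compactly supported. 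This yields
$$\inprod{\conn^i u_n, \phi} = \inprod{u_n, (-\tr \conn)^i \phi}.$$
Passing to the limit, the right-hand side tends to zero because $u_n \to 0$ in $\Lp{2}(\cV)$ and $(-\tr\conn)^i \phi$ is smooth with compact support (hence lies in $\Lp{2}$), while the left-hand side tends to $\inprod{v_i, \phi}$. Since $\Ck[c]{\infty}(\Tensors[0,i]\cM \tensor \cV)$ is dense in $\Lp{2}$, we conclude $v_i = 0$, proving that $\Xi_k$ is closeable.

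For the identification $\Sob{k,2}(\cV) = \dom(\close{\Xi_k})$, observe that on $S_k$ the graph norm of $\Xi_k$ is precisely $\norm{u}_{\Xi_k} = \norm{u} + \sum_{i=1}^k \norm{\conn^i u} = \norm{u}_{\Sob{k,2}}$ (up to the equivalent choice of norm on the finite direct sum). Any $u \in \dom(\close{\Xi_k})$ is, by definition, the $\Lp{2}$-limit of a sequence $u_n \in S_k$ with $\Xi_k u_n$ convergent, which is a Cauchy sequence in the Sobolev norm and so represents an element of $\Sob{k,2}(\cV)$; conversely, any Cauchy sequence in $(S_k, \norm{\cdot}_{\Sob{k,2}})$ is Cauchy for the graph norm of $\Xi_k$, so its $\Lp{2}$-limit lies in $\dom(\close{\Xi_k})$. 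Closeability ensures that this correspondence is well defined and injective, so the two completions coincide.

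The main obstacle is the iterated integration by parts in the closeability step: a priori one must push $i$ derivatives off a non-compactly-supported section $u_n$. This is precisely the subtlety already settled for $k=1$ in Corollary~\ref{Cor:Sob12} via the remark that the relevant pairing is compactly supported on account of $\phi$, and the argument extends verbatim to $1 \le i \le k$ because $S_k$ was designed so that each intermediate $\conn^j u_n$ is smooth and in $\Lp{2}$ on the appropriate bundle.
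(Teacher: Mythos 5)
Your proof is correct and follows essentially the same route as the paper: closeability by iterating the $k=1$ integration-by-parts mechanism of Corollary~\ref{Cor:Sob12}, and the identification of $\Sob{k,2}(\cV)$ with $\dom(\close{\Xi_k})$ via the equality of the Sobolev and graph norms on $S_k$. The only cosmetic difference is that the paper organises the iteration as a bootstrap---applying Corollary~\ref{Cor:Sob12} successively to $w_n = \conn u_n$, then to $\conn w_n$, and so on---whereas you unroll it into an explicit $i$-fold integration by parts against a compactly supported test section.
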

\begin{proof}
We note the case $k = 1$ is handled
in Corollary \ref{Cor:Sob12}.

Let $u_n \in S_k$
such that $u_n \to 0$ and $\Xi_k u_n \to (v_i)_{i=1}^k.$
Then, we have that 
$v_1 = 0$ by Corollary \ref{Cor:Sob12}.
Set $w_n = \conn u_n$ so that
$w_n \to v_1 = 0$ and $\conn w_n \to v_2$.
Again, by invoking Corollary \ref{Cor:Sob12}, 
we conclude that $v_2 = 0$.
Proceeding this way, 
we find that $v_i = 0$
for all $i = 1, \dots, k$.
Thus, $\Xi_k$ is closeable.

That $\Sob{k,2}(\cV) = \dom(\close{\Xi_k})$
follows from the fact that
$\norm{u}_{\Sob{k,2}} = \norm{u}_{\Xi_k} = \norm{u} + \norm{\Xi_k u}$
for $u \in S_k$.
\end{proof}

We conclude this section with the following 
regularity result that we use in \S\ref{Sect:H2} by 
specialising $\cV$ to 
$\cM \times \C$.
 
\begin{proposition}
\label{Prop:BLap}
Let $\cM$ be a smooth, complete Riemannian manifold
and $\cV$ a smooth vector bundle with metric $\mh$
and connection $\conn$ that are compatible.
Then, $\Sob[0]{2,2} \subset \dom(\BLap)$ and 
$\Sob{2,2}(\cV) \subset \dom(\BLap)$ continuously.
\end{proposition}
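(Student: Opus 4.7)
The plan is to first show that $S_2 \subset \dom(\BLap)$ with $\BLap \psi = -\tr \conn^2 \psi$, and then to extend this to all of $\Sob{2,2}(\cV)$ using the closedness of $\BLap$. Since $\Ck[c]{\infty}(\cV) \subset S_2$, this will simultaneously yield the continuous embedding $\Sob[0]{2,2}(\cV) \hookrightarrow \dom(\BLap)$ as a special case.

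The crucial step is the integration-by-parts identity: for $\psi \in S_2$ and $\phi \in \Ck[c]{\infty}(\cV)$,
\begin{equation*}
\inprod{\conn \phi, \conn \psi} = \inprod{\phi, -\tr \conn^2 \psi}.
\end{equation*}
This will follow from the divergence theorem applied to the compactly supported smooth vector field $X$ on $\cM$ determined by $\mg(X, Z) = \mh(\phi, \conn_Z \psi)$: compatibility of $\mh$ with $\conn$ and of $\mg$ with its Levi-Civita connection yields a pointwise expression for $\divv X$ whose integral against $d\mu$ vanishes by compact support and rearranges to the above identity. This is the two-derivative counterpart of the identity established in the proof of Corollary \ref{Cor:Sob12} (cf.\ Proposition 6.1 in \cite{BMc}). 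Since $\tr \conn^2 \psi \in \Lp{2}(\cV)$ for $\psi \in S_2$, this places $\conn \psi \in \dom(\adj{\conn[c]}) = \dom(-\divv)$ with $\divv \conn \psi = \tr \conn^2 \psi$; together with $\psi \in \dom(\close{\conn[c]})$ and $\close{\conn[c]} \psi = \conn \psi$ from Corollary \ref{Cor:Sob12}, this gives $\psi \in \dom(\BLap)$ with $\BLap \psi = -\tr \conn^2 \psi$.

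For a general $u \in \Sob{2,2}(\cV)$, I would approximate by $u_n \in S_2$ in the $\Sob{2,2}$-norm (Proposition \ref{Prop:Xi}), so that $u_n \to u$, $\conn u_n \to v_1$, and $\conn^2 u_n \to v_2$ in the respective $\Lp{2}$-spaces. The previous step gives $u_n \in \dom(\BLap)$ with $\BLap u_n = -\tr \conn^2 u_n \to -\tr v_2$ in $\Lp{2}(\cV)$; since $\BLap$ is self-adjoint and hence closed, this forces $u \in \dom(\BLap)$ with $\BLap u = -\tr v_2$. The estimate $\norm{u} + \norm{\BLap u} \le \norm{u} + \norm{\tr v_2} \lesssim \norm{u}_{\Sob{2,2}}$ follows from the pointwise bound on the metric trace, yielding continuity of the embedding.

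The main obstacle will be the justification of the integration-by-parts identity for non-compactly supported $\psi \in S_2$ tested against a compactly supported $\phi$; once this is handled by the divergence theorem argument above, the remaining steps are routine closure arguments using Corollary \ref{Cor:Sob12} and the closedness of $\BLap$.
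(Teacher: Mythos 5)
Your proposal is correct and follows essentially the same route as the paper: the same integration-by-parts identity $\inprod{\conn \phi, \conn \psi} = \inprod{\phi, -\tr\conn^2\psi}$ on smooth sections (via the divergence theorem as in Proposition 6.1 of \cite{BMc}), followed by approximation through Proposition \ref{Prop:Xi} and the closedness of $\BLap$. The only cosmetic difference is that you verify membership in $\dom(\BLap)$ directly from the definition $\divv = -\adj{\conn[c]}$ together with Corollary \ref{Cor:Sob12}, whereas the paper routes the same identity through the self-adjointness of $\BLap$; both are sound.
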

\begin{proof}
First, we show that whenever $u \in \Ck{\infty} \intersect \Lp{2}(\cV)$ 
and $\tr \conn^2 u \in \Ck{\infty} \intersect \Lp{2}(\cV)$,
then $u \in \dom(\BLap)$ and $\BLap u = -\tr\conn^2 u$.
For that, fix $v\in \Ck[c]{\infty}(\cV)$ and let 
$w \in \Ck{\infty}\intersect \Lp{2}(\cotanb\cM \tensor \cV)$
with $\tr \conn w \in \Ck{\infty} \intersect \Lp{2}(\cV)$. Then,
since $x \mapsto \mh(v(x), -\tr\conn w (x))$ is compactly supported, 
we have $\inprod{-\tr\conn w, v} = \inprod{w, \conn[c]v}$.
Furthermore, by an application of the Cauchy-Schwarz inequality,
we find that $\conn u \in \Ck{\infty} \intersect \Lp{2}(\cotanb\cM \tensor \cV)$.
Thus, upon putting $w = \conn u$, we have that
$\inprod{-\tr \conn^2 u, v} = \inprod{\conn u, \conn[c]v}$.
Again, since $x \mapsto \mg(\conn u(x), \conn[c]v(x))_x$ is 
compactly supported, we have that
$\inprod{\conn u, \conn[c]v} = \inprod{u, -\tr\conn[c]^2 v}$.
In particular, for our choice of $v$, we have that $\BLap v = -\tr\conn[c]^2 v$
and since $\BLap$ is self-adjoint, $u \in \dom(\BLap)$ and $\BLap u = -\tr\conn^2 u$.

Now, we show that $\Sob{2,2}(\cV) \subset \dom(\Lap_B)$ continuously. 
So, fix $u \in \Sob{2,2}(\cV)$ and note that
as a consequence of Proposition \ref{Prop:Xi},  there
exists $u_n \in \Ck{\infty} \intersect \Lp{2}(\cV)$
such that $u_n \to u$, and $\Xi_2 u_n \to \close{\Xi_2}u$.
In particular, $(\Xi_2 u_n)$ is Cauchy and
therefore, $\norm{\conn^2 u_n - \conn^2 u_m} \to 0$.
Coupling this observation with the fact that $|\tr\conn^2 u_n| \leq |\conn^2 u_n|$,
and since $\BLap u_n = -\tr\conn^2 u_n$, 
we have that $\BLap u_n \to v$.
But $\BLap$ is closed and hence,
$u \in \dom(\BLap)$ and $v = \BLap u$.
The continuity of the embedding follows easily.

Since $\Sob[0]{2,2}(\cV) \subset \Sob{2,2}(\cV)$
is a continuous embedding, we conclude that $\Sob[0]{2,2}(\cV) \subset \dom(\BLap)$
continuously.
\end{proof}
\section{Exterior, interior derivatives and the Dirac operator}

In this section, we 
consider the density 
problem for some natural operators over
the exterior algebra of $\cM$.
To fix notation,
let the space of $p$-forms on $\cM$
be denoted by $\Forms[p](\cM)$.
Let the fibre of the bundle $\Forms[p](\cM)$
over $x \in \cM$ be 
denoted by $\Forms[p]_x(\cM)$. 
Without causing too much confusion, 
let us denote the canonical 
extension of the metric $\mg$
on $\cM$ to $\Forms[p](\cM)$
again by $\mg$.
The \emph{exterior product} of a $p$-form
$\omega$ and $q$-form
$\eta$ is denoted by 
$\omega \wedge \eta \in \Forms[p+q](\cM)$,
and the \emph{interior} or \emph{cut}
product by $\omega \cut \eta \in \Forms[q - p](\cM)$
which is defined as the dual 
to $\wedge$.
The exterior algebra is then 
denoted by $\Forms(\cM) = \oplus_{p=0}^n \Forms[p](\cM)$.
This is a graded algebra with respect to $\wedge$.
By $\Forms_x(\cM)$, we denote 
the fibre of $\Forms(\cM)$ over $x$. 

The \emph{exterior} and \emph{interior}
derivatives are then defined
by the local expressions
$$\extd_\infty \omega = dx^i \wedge \conn[\partial_i] \omega
\quad\text{and}\quad
\intd_\infty \omega = - dx^i \cut \conn[\partial_i] \omega$$
with domains
$\dom(\extd_\infty) = \dom(\intd_\infty) = \Ck{\infty}(\Forms(\cM))$. 
On $p$-forms,
$\extd_\infty: \Ck{\infty}(\Forms[p](\cM)) \to \Ck{\infty}(\Forms[p+1](\cM))$
and $\intd_\infty: \Ck{\infty}(\Forms[p](\cM)) \to \Ck{\infty}(\Forms[p-1](\cM))$.
By $\extd_c$ and $\intd_c$, denote
the operators $\extd_\infty$ and $\intd_\infty$
but with $\dom(\extd_c) = \dom(\intd_c) = \Ck[c]{\infty}(\Forms(\cM))$.
Recall also that
$$
\inprod{\extd_c\omega, \eta}
	= \inprod{\omega, \intd_c \eta}$$
for $\omega, \eta \in \Ck[c]{\infty}(\Forms(\cM))$.
Thus, $\extd_c$ and $\intd_c$ are densely-defined, 
closable operators.
We denote the closures by $\extd_0$ and $\intd_0$
respectively.
Also, let $\extd = \adj{\intd}_c$ and
$\intd = \adj{\extd}_c$. Then,
we have the following theorem.

\begin{theorem}
\label{Thm:ExtInt}
Let $\cM$ be a smooth, complete Riemannian manifold
with smooth metric $\mg$ and Levi-Cevita connection 
$\conn$.
Then, $\extd_0 = \extd$ and $\intd_0 = \intd$.
In other words, $\Ck[c]{\infty}(\Forms(\cM))$
is dense in $\dom(\extd)$ and in $\dom(\intd)$.
Furthermore, $\Ck[c]{\infty}(\Forms(\cM))$
is dense in $\dom(\extd\intd)$ and in  
$\dom(\intd\extd)$.
\end{theorem}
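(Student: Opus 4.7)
The plan is to mirror the strategy used in Theorem~\ref{Thm:VBdense}: package $\extd$ and $\intd$ into a single first-order symmetric operator $\Pi$ on a direct-sum Hilbert space, verify that its principal symbol has uniformly bounded propagation speed, and invoke Theorem~\ref{Thm:Main}.

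On $\Hil = \Lp{2}(\Forms(\cM)) \oplus \Lp{2}(\Forms(\cM))$ I would define
$$
\Pi = \begin{pmatrix} 0 & \intd_\infty \\ \extd_\infty & 0 \end{pmatrix},
\qquad
\dom(\Pi_c) = \Ck[c]{\infty}(\Forms(\cM)) \oplus \Ck[c]{\infty}(\Forms(\cM)).
$$
Symmetry of $\Pi_c$ is immediate from the $\extd_c$--$\intd_c$ adjoint relation recalled just before the theorem: for $(\alpha,\beta), (\gamma,\delta) \in \dom(\Pi_c)$,
$$
\inprod{\Pi_c(\alpha,\beta),(\gamma,\delta)} = \inprod{\intd_c\beta,\gamma} + \inprod{\extd_c\alpha,\delta} = \inprod{\beta,\extd_c\gamma} + \inprod{\alpha,\intd_c\delta} = \inprod{(\alpha,\beta),\Pi_c(\gamma,\delta)}.
$$
Using the Leibniz rules $\extd(g\omega) = \extd g \wedge \omega + g\,\extd\omega$ and $\intd(g\omega) = -\extd g \cut \omega + g\,\intd\omega$ for $g \in \Ck{\infty}(\cM)$, I would compute, for $v \in \cotanb_x\cM$ and $e = (e_1,e_2) \in \Forms_x(\cM) \oplus \Forms_x(\cM)$,
$$
\symb_\Pi(x,v)e = (-v \cut e_2,\, v \wedge e_1).
$$
The pointwise estimates $|v \wedge e_1| \leq |v|\,|e_1|$ and $|v \cut e_2| \leq |v|\,|e_2|$ then yield $|\symb_\Pi(x,v)| \leq |v|$, so $\cp_\Pi(x) \leq 1$ uniformly in $x$.

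Theorem~\ref{Thm:Main} now applies, so $\Pi_c$ and each of its powers are essentially self-adjoint. On the one hand, $\close{\Pi_c}$ has $\intd_0$ and $\extd_0$ as its off-diagonal entries; on the other, $\adj{\Pi_c}$ has $\adj{\extd_c} = \intd$ and $\adj{\intd_c} = \extd$ as off-diagonal entries. The equality $\close{\Pi_c} = \adj{\Pi_c}$ forces $\extd_0 = \extd$ and $\intd_0 = \intd$. For the final assertion, the off-diagonal terms of $\Pi_c^2$ vanish identically since $\extd_c^2 = 0 = \intd_c^2$, leaving
$$
\Pi_c^2 = \begin{pmatrix} \intd_c \extd_c & 0 \\ 0 & \extd_c \intd_c \end{pmatrix},
$$
whose essential self-adjointness, together with the already-established self-adjointness of the diagonal entries of $(\close{\Pi_c})^2$, delivers density of $\Ck[c]{\infty}(\Forms(\cM))$ in $\dom(\intd\extd)$ and in $\dom(\extd\intd)$.

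The only step that I expect to need some care is the symbol computation, in particular recording (and briefly justifying) the Leibniz rule for $\intd_\infty$, which is standard since $\intd$ is formally $\wedge$-adjoint to $\extd$ at the symbol level. Everything else closely parallels the matrix argument used for $\conn$ and $-\tr\conn$ in Theorem~\ref{Thm:VBdense}, so no substantially new ideas are required.
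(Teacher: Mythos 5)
Your proposal is correct and follows essentially the same route as the paper: the same block operator $\Pi$ on $\Lp{2}(\Forms(\cM)) \oplus \Lp{2}(\Forms(\cM))$, the same symmetry check via the $\extd_c$--$\intd_c$ adjoint relation, the same symbol computation and unit propagation-speed bound, and the same appeal to Theorem~\ref{Thm:Main} followed by maximality of self-adjoint extensions. (One cosmetic slip: the off-diagonal entries of $\Pi_c^2$ vanish automatically from the anti-diagonal block structure, not because $\extd_c^2 = \intd_c^2 = 0$; this does not affect the argument.)
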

\begin{proof}
Define $$\Pi_\infty = \begin{pmatrix}
	0 & \intd_\infty \\
	\extd_\infty & 0
	\end{pmatrix}$$
with $\dom(\Pi_\infty) = \Ck{\infty}(\Forms(\cM)) \oplus \Ck{\infty}(\Forms(\cM))$.
Then, let $\Pi_c$ to be 
the operator $\Pi_\infty$ with 
$\dom(\Pi) =\Ck[c]{\infty}(\Forms(\cM)) \oplus \Ck[c]{\infty}(\Forms(\cM)).$

First, we show that $\Pi_c$ is symmetric.
Fix $u = (u_1, u_2), v = (v_1, v_2) \in \Ck[c]{\infty}(\Forms(\cM))$, 
and note that $\Pi_c u = (\intd_c u_2, \extd_c u_1)$,
$\Pi_c v = (\intd_c v_2, \extd_c v_1)$. Then,
$$
\inprod{\Pi_c u, v}
	= \inprod{\intd_c u_2, v_1} + \inprod{\extd_c u_1, v_2}
	= \inprod{u_2, \extd_c v_1} + \inprod{u_1, \intd_c v_1}
	= \inprod{u, \Pi_c v}.$$

Now, fix $v \in \cotanb \cM$ and $e \in \Forms_x(\cM)$.
Let $g \in \Ck{\infty}(\cM)$ and $f \in \Ck{\infty}(\Forms(\cM)) \oplus \Ck{\infty}(\Forms(\cM))$
such that $\extd g(x) = v$ and $f = (f_1, f_2)$
and $f(x) = e = (e_1, e_2)$.
Then, 
$$\symb(x,v)e 
	= \comm{\Pi_\infty, gI}f(x)
	= \Pi_\infty(gf) - g\Pi_\infty f
	= (\intd_\infty(g f_2) - g\intd_\infty f_2, \extd_\infty(g f_1) - g\extd_\infty f_1).$$
Note then that, 
$$
\extd_\infty(g f_1) - g\extd_\infty f_1
	= (\extd_\infty g \wedge f_1 + g \extd_\infty f_1) - g \extd_\infty f_1
	= \extd_\infty g \wedge f_1.$$
Thus, at $x$, $\extd_\infty g \wedge f_1 = v \wedge e_1$.
By a similar calculation,
$$
\intd_\infty(g f_2) - g \intd_\infty f_2
	= (g \intd_\infty f_2 - \extd_\infty g \cut f_2) - g \intd_c f_2
	= - \extd_\infty g \cut f_2$$
so that at $x$,
$\extd_\infty g \cut f_2 = v \cut e_2$.
We combine these two calculations 
so that
$$
\modulus{\symb(x,v)e}^2
	= \modulus{v \wedge e_1}^2 + \modulus{v \cut e_2}^2
	\leq \modulus{v}^2(\modulus{e_1}^2 + \modulus{e_2}^2)
	\leq \modulus{v}^2 \modulus{e}^2.$$

Therefore,
$\modulus{\symb(x,v)} = \modulus{v}$ and 
thus, $\cp(r) = 1$.
Therefore, by Theorem \ref{Thm:Main}, 
we conclude that $\Pi_c$
and powers of $\Pi_c$ are
essentially self-adjoint.
Thus,
$\adj{\Pi}_c = \adj{\close{\Pi}}_c
	= \close{\Pi_c}$
and since 
$$\adj{\Pi}_c = \begin{pmatrix}
		0 & \intd \\
		\extd & 0 
		\end{pmatrix},$$
we conclude that $\close{\extd}_c = \extd$ and 
$\close{\intd}_c = \intd$.

Next, note that
$$\Pi_c^2 = \begin{pmatrix} 
		\intd_c \extd_c & 0 \\
		0 & \extd_c \intd_c
		\end{pmatrix}$$
and by similar reasoning as above, 
we have that $\close{\intd_c \extd_c}$
and $\close{\extd_c\intd_c}$ are self-adjoint.
It is easy to see that $\close{\intd_c \extd_c}
\subset \intd\extd$
and $\close{\extd_c \intd_c} \subset \extd\intd$.
Furthermore, 
$\intd\extd$ and $\extd\intd$ are self-adjoint
and so by Proposition \ref{Prop:Sym}, we conclude
that
$\close{\intd_c\extd_c} = \intd\extd$
and 
$\close{\extd_c\intd_c} = \extd\intd$.
\end{proof} 

Also, define
$\Dir_\infty: \Ck{\infty}(\Forms(\cM)) \to \Ck{\infty}(\Forms(\cM))$
by $\Dir_\infty = \extd_\infty + \intd_\infty$
and $\Dir_c = \extd_c + \intd_c$
with domain $\dom(\Dir_c) = \Ck[c]{\infty}(\Forms(\cM))$.
An easy calculation shows that
$$\inprod{\Dir_c \omega, \eta} = \inprod{\omega, \Dir_c \eta}$$
when $\omega, \eta \in \Ck[c]{\infty}(\Forms(\cM))$.
Thus, $\Dir_c$ is a densely-defined,
closable operator. Denote its closure
by $\Dir_0$ and let $\Dir = \extd + \delta$. 
The latter operator is the \emph{Hodge-Dirac}
operator with domain $\dom(\Dir) = \dom(\extd) \intersect \dom(\delta)$.
As a consequence of Theorem \ref{Thm:ExtInt}, $\Dir$ is self-adjoint.
The \emph{Hodge-Laplacian} is then denoted by 
$\Dir^2$. We have the following theorem
which is well known and
written as an application in 
\cite{Ch} but  we include
it here for completeness.
 
\begin{theorem}
Let $\cM$ be a smooth, complete Riemannian manifold with smooth 
metric $\mg$ endowed with the Levi-Cevita connection
$\conn$.
Then, we have that $\Dir_0 = \Dir$
and $\close{\Dir^2_c} = \Dir^2$.
In other words, $\Ck[c]{\infty}(\Forms(\cM))$
is dense in $\dom(\Dir)$ and in $\dom(\Dir^2)$.
\end{theorem}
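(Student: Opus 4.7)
The plan is to apply Theorem \ref{Thm:Main} directly to $\Dir_c$ in the same spirit as the proof of Theorem \ref{Thm:ExtInt}, but without needing to pass through an auxiliary block-matrix operator. Symmetry of $\Dir_c$ on $\Ck[c]{\infty}(\Forms(\cM))$ follows immediately from the identity $\inprod{\Dir_c \omega, \eta} = \inprod{\omega, \Dir_c \eta}$ noted just before the statement. Consequently, the only remaining hypothesis of Theorem \ref{Thm:Main} to verify is that the speed of propagation $\cp_{\Dir}(x)$ is uniformly bounded on $\cM$.

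To that end, I would compute $\symb_{\Dir}(x,v)$ using the product-rule computations already performed in the proof of Theorem \ref{Thm:ExtInt}. Writing $\symb_{\Dir} = \symb_{\extd_\infty} + \symb_{\intd_\infty}$ and reading off the formulas from that proof gives
\[ \symb_{\Dir}(x,v)e = v \wedge e - v \cut e, \]
which is exactly Clifford multiplication by $v$ on $\Forms_x(\cM)$. The main technical point is the well-known pointwise identity $(v \wedge - v \cut)^2 = -\modulus{v}^2 I$, obtained by expanding the composition, using $v \wedge v = 0$ and $v \cut (v \cut \cdot) = 0$, and invoking the interior-exterior identity $v \wedge (v \cut e) + v \cut (v \wedge e) = \modulus{v}^2 e$. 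Since $v \wedge$ and $v \cut$ are mutually adjoint, this forces $\symb_{\Dir}(x,v)$ to be skew-adjoint and yields $\modulus{\symb_{\Dir}(x,v)e}^2 = \modulus{v}^2 \modulus{e}^2$, so $\cp_{\Dir}(x) = 1$ for every $x \in \cM$. This Clifford computation is the only genuine obstacle, and it is a routine but pivotal observation.

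With the speed bound in hand, Theorem \ref{Thm:Main} implies that both $\Dir_c$ and $\Dir_c^2$ are essentially self-adjoint. Hence $\Dir_0 = \close{\Dir_c}$ is self-adjoint. The operator $\Dir = \extd + \intd$ is self-adjoint by the remark immediately preceding the statement (a consequence of Theorem \ref{Thm:ExtInt}), and clearly $\Dir_0 \subset \Dir$ since both operators agree on $\Ck[c]{\infty}(\Forms(\cM))$. Proposition \ref{Prop:Sym} therefore forces $\Dir_0 = \Dir$. Similarly, $\close{\Dir_c^2}$ is self-adjoint and contained in $\Dir^2$, the latter being self-adjoint as the square of the self-adjoint operator $\Dir$ (e.g.\ via its spectral theorem), so a second application of Proposition \ref{Prop:Sym} yields $\close{\Dir_c^2} = \Dir^2$, completing the proof.
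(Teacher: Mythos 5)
Your proposal is correct and follows essentially the same route as the paper: verify symmetry of $\Dir_c$, compute the symbol $\symb(x,v)e = v\wedge e - v\cut e$ to get a uniform speed of propagation, invoke Theorem~\ref{Thm:Main} for essential self-adjointness of $\Dir_c$ and its powers, and finish with Proposition~\ref{Prop:Sym}. Your explicit Clifford-algebra justification that $\modulus{\symb(x,v)e} = \modulus{v}\modulus{e}$ is a welcome detail the paper leaves implicit, but it does not change the argument.
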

\begin{proof}
It is straightforward to check that
$\Dir_c$ is symmetric. 
The computation of the symbol was
done in the proof of Theorem \ref{Thm:ExtInt}.
Explicitly
$$\symb(x,v)e = v \wedge e - v \cut e,$$
and thus $c(r) = 1$.
Thus, by Theorem \ref{Thm:Main}, 
we have that $\Dir_0$ is self-adjoint. 
But $\Dir_0 \subset \Dir$ and $\Dir$
is self-adjoint so by Proposition \ref{Prop:Sym},
we have that $\Dir_0 = \Dir$. Similarly,
$\close{\Dir_c^2} = \Dir^2.$ 
\end{proof} 
\section{Second order Sobolev spaces on manifolds}
\label{Sect:H2}

Throughout this section, in addition to the smoothness and completeness 
assumptions on $\cM$ and $\mg$,
we further assume that $\cM$ is endowed
with the Levi-Cevita connection $\conn$.
In this setting, we show that $\Sob[0]{2,2}(\cM) = \Sob{2,2}(\cM)$
whenever $\Ric \geq \eta \mg$ for some $\eta \in \R$.
The best currently 
known result according to 
Hebey in \cite{Hebey} 
requires the additional
assumption that $\inj(\cM) \geq \kappa > 0$.
This is the content of Proposition 3.3 in \cite{Hebey}.

Let us recall some notation from \S\ref{Sect:VB},
but specialised to the case that $\cV$ 
is $\cM \times \C$. Let $\conn[c] = \conn$
and $\Lap = -\tr\conn^2: \Ck{\infty}(\cM) \to \Ck{\infty}(\cM)$ 
denote the Laplacian on smooth functions. 
Furthermore, let $\Lap_c = \Lap$ with $\dom(\Lap_c) = \Ck[c]{\infty}(\cM)$.
Since this is a densely-defined, closeable operator,
let $\Lap_0$ be its closure with domain 
$\dom(\Lap_0)$.
Recall that 
the Bochner Laplacian 
is the self-adjoint operator given by $\Lap_B = -\divv\close{\conn_c}$
with domain $\dom(\Lap_B)$.  

With this notation, we prove the highlight theorem of this
paper.

\begin{proof}[Proof of Theorem \ref{Thm:High}]
We note that the assumptions made in Theorem \ref{Thm:High}
satisfy the hypotheses of Theorem \ref{Thm:VBdense}
and Proposition \ref{Prop:BLap}
with $\cV$ as $\cM \times \C$.
By the latter result, we have that $\Sob[0]{2,2}(\cM) \subset \dom(\BLap)$
and $\Sob{2,2}(\cM) \subset \dom(\BLap)$ continuously.

Next, recall the following
Bochner-Lichnerowicz-Weitzenb\"ock
identity
$$
\inprod{\conn \Lap u, \conn u} = \frac{1}{2} \Lap\modulus{\conn u}^2
+ \modulus{\conn^2 u}^2 + \Ric(\conn u, \conn u)
$$
for $u \in \Ck{\infty}(\cM)$.
By using the bound $\Ric \geq \eta \mg$, 
we obtain that 
$\norm{\conn^2 u} \lesssim \norm{(I + \Lap)u}$
whenever $u \in \Ck[c]{\infty}(\cM)$.
Thus, coupled with the fact that
$\Lap_0 = \BLap$ by Theorem \ref{Thm:VBdense}, 
we find the 
the reverse inclusion $\dom(\Lap_B) \subset \Sob[0]{2,2}(\cM)$
holds and this allows us to conclude that $\dom(\Lap_B) = \Sob[0]{2,2}(\cM)$.

Combining these set inclusions, we have that 
$\Sob{2,2}(\cM) \subset \dom(\Lap_B) = \Sob[0]{2,2}(\cM).$
But $\Sob[0]{2,2}(\cM) \subset \Sob{2,2}(\cM)$
continuously and so we conclude that $\Sob[0]{2,2}(\cM) = \Sob{2,2}(\cM)$.
\end{proof}

\bibliographystyle{amsplain}
\def\cprime{$'$}
\providecommand{\bysame}{\leavevmode\hbox to3em{\hrulefill}\thinspace}
\providecommand{\MR}{\relax\ifhmode\unskip\space\fi MR }
\providecommand{\MRhref}[2]{%
  \href{http://www.ams.org/mathscinet-getitem?mr=#1}{#2}
}
\providecommand{\href}[2]{#2}


\setlength{\parskip}{0mm}

\end{document}